\documentclass[11pt,regno]{amsart}
\usepackage{amsmath}
\usepackage{amsfonts,amssymb}

\usepackage{euscript,graphicx}
\usepackage{epstopdf,rotating}
\newtheorem{theorem}{Theorem}
\newtheorem{lemma}{Lemma}
\newtheorem{corollary}{Corollary}
\newtheorem{proposition}{Proposition}
\theoremstyle{definition}
\newtheorem{example}{Example}
\newtheorem{remark}{Remark}
\newtheorem*{remark*}{Remark}

\newcommand{\eqdef}{\stackrel{\scriptscriptstyle\rm def}{=}}

\def\bR{\mathbb{R}}

\def\cB{\EuScript{B}}

\def\cM{\EuScript{M}}

\def\cR{{\mathcal R}}
\def\bT{\mathbb{T}}
\newcommand{\essup}{\operatornamewithlimits{\text{\rm{\,ess\,sup}}}}
\def\OO{\mathcal O}

\DeclareMathSymbol{\varnothing}{\mathord}{AMSb}{"3F}

\begin{document}
\title{{(Non)Invariance of dynamical quantities for orbit equivalent flows}}

\author{Katrin~Gelfert} 
\address{Instituto de Matem\'atica, UFRJ, 
Cidade Universit\'aria - Ilha do Fund\~ao, Rio de Janeiro 21945-909,  Brazil}
\email{gelfert@im.ufrj.br}
\author{Adilson~E.~Motter}
\address{Department of Physics and Astronomy \&
	Northwestern Institute on Complex Systems,
  Northwestern University, Evanston, Illinois 60208-3112, USA}
\email{motter@northwestern.edu}

\begin{abstract}
	We study how dynamical quantities such as Lyapunov exponents,
        metric entropy, topological pressure, recurrence rates, and
        di\-men\-sion-like characteristics change under a time reparameterization of a
        dynamical system.
        These quantities are shown to either remain invariant, transform according to a
        multiplicative factor or transform through a convoluted dependence that
        may take the form of an integral over the initial local values. We discuss the
        significance of these results for the apparent non-invariance  of chaos in general relativity
        and explore applications to the synchronization of equilibrium states and the elimination of 	expansions.
\keywords{Smooth time changes, Lyapunov exponents, thermodynamic formalism,
  dimensions, relativistic chaos}
\end{abstract}
\maketitle

\section{Introduction}
In the context of dynamical systems theory one may argue that the existence of
chaotic behavior together with its quantifiers should be invariant under changes of coordinates.
Depending on the quantifier of chaos, such changes could be either measurable,
continuous, or differentiable.
The topological entropy, for example, stands as a fundamental topological conjugacy
invariant.
The Hausdorff dimension, on the other hand,
 is invariant under bi-Lipschitz transformations of the phase space only.

In the case of continuous-time dynamical systems, as  generated by
flows of vector fields, the problem is more involved when one considers that
any change of coordinates allows in addition for a change of  time
parameterization. Some properties, such as ergodicity and the finiteness of the
entropy, are preserved by any measurable time transformation of the flow.
Other dynamical quantities or spectral invariants can exhibit surprising transformation
properties.
For example, suitable changes of time can strengthen
or simulate chaotic properties in the flow. In particular, for any ergodic flow there exists a transformation of time
that yields a mixing flow and for any ergodic flow with positive entropy there
exists a time change that yields a K-flow (see~\cite{KatTho:p} and references
therein).

In this paper, we offer a complete characterization of how quantifiers of chaotic
behavior transform under a smooth time reparameterization. The importance of
this work is two-fold. First, from a strictly mathematical perspective, it provides
a scheme of relationships for the classification and study of dynamical invariants
across orbit equivalent flows. For example, it is useful to consider the time
transformation of Lyapunov exponents and dimension quantities if one wants to
study the generality of conjectures that relate these quantities, such as the
Kaplan-Yorke relation~\cite{MotGel:09}.

Second, this work provides a mathematical foundation for the study of chaos in general
relativity and other physical theories that do not have an absolute time parameter.
The foundations of chaos in relativistic dynamics became a topic of intense research
after the observation, first reported in~\cite{FraMat:88}, that space-time transformations
commonly used in the study of (relativistic) cosmology could transform positive Lyapunov
exponents into zero Lyapunov exponents. This finding, which was confirmed in numerous
subsequent studies~\cite{BurBurEll:90,Ber:91,HobBerWelSim:91,Szy:97}  and was even the topic of a conference in the early nineties~\cite{HobBurCol:94}, led to the tacit assumption that Lyapunov exponents are unsuitable indicators of chaos in relativistic dynamics. Many alternatives have been proposed, including the use of methods based on symbolic dynamics~\cite{Rug:,CorLev:97}, Pailev\'e analysis~\cite{ConGraRam:94}, local curvature~\cite{SzyKra:96}, fractals~\cite{CorLev:97b,MotLet:01}, and system-specific analyses such as those based on Misner-Chitr\'e-like variables~\cite{BenMon:04}.
More recently, it has been shown that Lyapunov exponents can be restated as reliable
quantifiers of chaos if the transformation is such that it does not create singularities
in the invariant measure~\cite{Mot:03,MotSaa:09}. However, the conditions under which different approaches can be used to provide an invariant characterization of chaos under time reparameterizations remain debatable. In particular, it is currently unclear whether the fractal-based characterization of chaos by means of dimensions of measures, which has stood out as
the most promising alternative, is any more invariant than the Lyapunov exponents themselves.
This is an important motivation for establishing a mathematical basis for the study chaos under time reparameterizations, which is one of the goals of this paper.

Among the main results in the paper, we anticipate that for sufficiently regular
transformations: ($i$) the Lyapunov exponents
remain essentially invariant, except for a multiplicative factor that accounts for
the change of the time units; ($ii$) a time reparameterization can be used not only to
synchronize the measure of maximal entropy and the SRB measure, but also to
synchronize expansion rates and equilibrium states of smooth potentials;
($iii$) the generalized dimensions of measures $D_q$ are invariant for $q\ne 1$,
whereas the information dimension $D_1$ and entropies may change.
These and the accompanying results address a longstanding problem in relativistic dynamics, namely, they identify
the conditions under which different quantities can (or cannot) be used as invariant
indicators of chaos.

The paper is organized as follows. In Section~\ref{sec:2}, we introduce
definitions and recall basic results on time transformations, such as the
transformation of an invariant measure. In Section~\ref{sec:3}, we establish
a fundamental result on the time transformation of the solutions of the
variational equations.  In Section~\ref{sec:5}, we present the
main results about the transformation of dynamical quantities under time
reparameterizations. Results on the time transformation of dimension-like
characteristics  of measures are presented in Section~\ref{sec:6}. In the last
section we discuss implications of our results and future directions.

\section{Orbit equivalent flows and time reparameterizations}\label{sec:2}

Throughout the paper, $M$ is a manifold of smoothness $C^m$, $m\ge 2$, $g$
is a Riemannian metric of class $C^{m-1}$ on $M$, and $f$ is a $C^\ell$ vector
field on $M$, $1\le \ell<m$. We write $\rho$ for the distance induced by the
Riemannian structure on $M$.

We assume that the differential equation
\begin{equation}\label{eq1}
\frac{dx}{dt} = f(x)
\end{equation}
has a $C^\ell$ flow $\varphi\colon\bR\times M\to M$, and we write
$\varphi^t(\cdot)\eqdef\varphi(t,\cdot)$. The curve
$t\mapsto\varphi^t(x_0)$, $t\in\bR$, is the unique solution of~(\ref{eq1})
with the initial condition $x(0)=x_0$.
For the flow $\Phi=\{\varphi^t\}_{t\in\bR}$ we denote by $\cM(\Phi)$ the set
of all $\Phi$-invariant Borel probability measures on $M$.
We endow $\cM(\Phi)$ with the weak$*$ topology. Moreover, we denote by
$\cM_{\rm e}(\Phi)\subset \cM(\Phi)$ the subset of ergodic measures.

A $C^\ell$ flow $\Phi_r$ is a \emph{time reparameterization} of the flow $\Phi$ if for
every $x\in M$ the orbits
$\{\varphi^\tau_r(x)\}_{\tau\in\bR}$ and $\{\varphi^t(x)\}_{t\in\bR}$ coincide and if they have the same
orientation given by the change of $t$ and $\tau$ in the positive direction%
\footnote{Throughout the paper, the index $r$ is used to indicate time reparameterized quantities. The use of this notation will become clear shortly.}.
We study a $C^\ell$ flow $\Phi_r=\{\varphi^\tau_r\}_{\tau\in\bR}$ on $M$ that is
\emph{$C^k$-orbit equivalent} to $\Phi$, $k\le \ell$. This means that there exists a
$C^k$ diffeomorphism $h\colon M\to M$ such that the flow
$\{\widetilde\psi^\tau\}_{\tau\in\bR}$  given by $\widetilde
\psi^t\eqdef h^{-1}\circ \varphi^\tau_r\circ h$ is a time reparameterization of the
flow $\{\varphi^t\}_{t\in\bR}$.
Without loss of generality for the results of this paper we assume that $h$ is the identity, meaning that the orbits themselves are not deformed. See~\cite{Par:81,CorFomSin:82,KatHas:95,Tot:66} for treatments of the classical theory of time reparameterization of flows.

If $\Phi_r$ is a time transformation of $\Phi$, then we have
\begin{equation}\label{F1}
\varphi_r^\tau(x) = \varphi^{t(x,\tau)}(x)
\end{equation}
for every $x\in M$, where
$(x,\tau)\mapsto t(x,\tau)$ is a real-valued function which satisfies
$t(x,\tau_1+\tau_2)=t(x,\tau_1)+ t(\varphi^{\tau_1}(x),\tau_2)$,
$t(x,-\tau)=-t(\varphi^{-\tau}(x),\tau)$, and $t(x,\tau)\ge 0$
if $\tau\ge 0$.
For every point $x$ that is not a fixed point,  $(x,\tau)\mapsto t(x,\tau)$ is a $C^\ell$ function satisfying $t(x,\tau)>0$ whenever $\tau>0$.
One can view the flow $\Phi_r$ as being obtained from $\Phi$ by means of a
change of time parameterization determined by
\begin{equation}\label{eq1t}
	d\tau = r(x)\,dt,
\end{equation}
where
\[
r(x)\eqdef
\left(\frac{\partial}{\partial\tau}t(x,\tau)|_{\tau=0}\right)^{-1}
\]
at every non-fixed point $x$. In particular,
at every non-fixed $x$ the function $r$ is $C^{\ell-1}$ and positive, and
$\tau\mapsto \varphi^\tau_r(x)$ solves the differential equation
\begin{equation}\label{eq}
\frac{dx}{d\tau} = \frac{f(x)}{r(x)},
\end{equation}
with initial condition $x(0)=x$.
Of particular interest are orbit equivalent flows that are  \emph{a priori} obtained by means of a
time change determined by~(\ref{eq1t}). Clearly, to accomplish orbit equivalence, such a  function $r$  is non-negative everywhere, and positive and sufficiently regular at any non-fixed point.

As indicated above, the description of the behavior of the flows close to fixed points requires
particular care. To describe this in a general setting, assume that $r$ is a non-negative measurable function and denote by
\begin{equation}\label{defm}
	\widehat M
	\eqdef \left\{ x\in M\colon f(x)\ne 0\right\}
\end{equation}
the \emph{regular set} of $\Phi$ (and hence of $\Phi_r$).
In the particular case in which $r$ is  positive and of class $C^{\ell-1}$
everywhere on $M$, we set $\widehat M\eqdef M$ instead of~(\ref{defm}).
Given $x\in\widehat M$ and $t\in\bR$, we define
\begin{equation}\label{defah}
 \tau(x,t)\eqdef \int_0^t  r(\varphi^s(x))\,ds\,.
\end{equation}
If $\mu\in\cM(\Phi)$ and $r$ is $\mu$-integrable and positive on some set of positive measure, then
for almost every $x\in\widehat M$ the function $t\mapsto \tau(x,t)$ is strictly increasing and satisfies $ \tau(x,0)=0$ and
\begin{equation}\label{limbeta}
	\lim_{t\to\pm\infty}   \tau(x,t)= \pm\infty.
\end{equation}
Then $\Phi_r = \{\varphi^\tau_r\}_{\tau\in \bR}$ is a measurable flow on $(\widehat M,\widehat\cB)$, where $\widehat\cB\eqdef \cB\cap\widehat M$ is the restriction of the
$\sigma$-field $\cB$ to $\widehat M$.
Condition~(\ref{limbeta}) provides an important criterion for the selection of eligible time reparameterizations for the study of chaos in relativistic systems exhibiting cosmological singularities~\cite{MotLet:02} or event horizons~\cite{MotSaa:09}.

The time-transformed flow $\Phi_r$  in general evolves at a different speed and
possesses different invariant probability distributions.
Given some $\mu\in\cM(\Phi)$, assume that $r$ is a non-negative function that is positive on some positive measure set and satisfies $\int_0^1r(\varphi^s(x))ds<\infty$ at $\mu$-almost every $x$.
This measure given by $d\widehat\mu_r \eqdef r \, d\mu$
on $\widehat M$ is $\Phi_r$-invariant and $\sigma$-finite.
The measure is finite, and hence normalizable, if and only if $r$ is  $\mu$-integrable,
in which case we introduce the $\Phi_r$-invariant probability measure $\mu_ r$ by
\begin{equation}\label{sua}
d\mu_r \eqdef \frac{ r}{\int_M r \,d\mu} d\mu .
\end{equation}
Note that for every $\Phi$-invariant measurable $A$ we have $\mu_r(A)=0$ if
$\mu(A)=0$. 
The measure $\mu_ r$ is ergodic if $\mu$ is.
If $r$ is $\mu$-integrable, from~(\ref{sua}) we conclude that for every continuous function $\xi\colon
M\to\bR$ we have
\begin{equation}\label{defahh}
  \int_M \frac{\xi}{ r} \,d\mu_ r
  = \frac{\int_M\xi \,d\mu}{\int_M r \,d\mu}\,.
\end{equation}
For a proof of the above stated properties in the context of abstract
metrically isomorphic flows, see~\cite{CorFomSin:82,Tot:66} and
references therein.
We continue to denote by $\cM(\Phi\equiv\Phi_1)$ (by $\cM(\Phi_r)$) the set of
$\Phi$-invariant ($\Phi_r$-invariant) probability measures.

\begin{remark}{\rm
If the function $r$ is such that $1/r\in L^1(M,\mu)$, then the original flow
$\Phi$ is obtained by the time reparameterization of the flow $\Phi_r$  determined by the function $1/r$. In
this case the map $\mu\mapsto\mu_ r$ defined by~(\ref{sua}) is a bijection from
$\cM_{\rm e}(\Phi)$ to $\cM_{\rm e}(\Phi_r)$. The
decomposition of a measure into its ergodic components is preserved under this
transformation, although the particular weight associated to each component is not necessarily the same.
}\end{remark}

\begin{remark}{\rm
Given two orbit equivalent flows $\Phi$ and  $\Phi_r$ without fixed points, for every $\mu\in\cM_{\rm e}(\Phi)$ the sets of generic points with respect to the measures $\mu$ and $\mu_r$
coincide~\cite{Ohn:80}.
This means that we have $G(\mu,\Phi)=G(\mu_ r,\Phi_r)$, where
\[
  G(\mu,\Phi)\eqdef
  \left\{x\colon\lim_{t\to\infty}\frac{1}{t}\int_0^t\xi(\varphi^s(x))\,ds
    = \int_M \xi \,d\mu\,\,{\rm for}\,\,{\rm every}\,\,\xi\in C^0(M,\bR)\right\}
\]
and $G(\mu_ r,\Phi_r)$ is defined similarly.
}\end{remark}

\section{Variational equations}\label{sec:3}

Given a point $x\in M$ and the trajectory $\{\varphi^t(x)\colon
t\in\bR\}$ that passes through $x$, we introduce the \emph{variational
  differential equation}
\begin{equation}\label{ve}
  \frac{Dy}{dt}=\nabla f(\varphi^t(x))\,y,
\end{equation}
where $\nabla f$ is the covariant derivative of the vector field $f$. The
absolute derivative is taken along the curve $t\mapsto \varphi^t(x)$.
For $x\in M$ and $v\in T_xM$, the linearization $y(t)\eqdef y(t,x,v)$
of the flow map $\varphi^t$ given by
\begin{equation}\label{ve_vec}
y(t)\eqdef D_x\varphi^t(v)
\end{equation}
is the solution of equation~(\ref{ve}) with $y(0)=v$.
Analogously, given $x\in\widehat M$, the linearization $z(\tau)\eqdef z(\tau,x,v)$ of the flow map $\varphi_r^\tau$ given by
\[
z(\tau)\eqdef D_x\varphi_r^\tau(v)
\]
is the solution of the variational differential equation
\begin{equation}\label{kak}
  \frac{Dz}{d\tau}
  = \frac{1}{ r(\varphi_r^\tau(x))}\nabla f(\varphi_r^\tau(x)) \,z
  +  \left( D\frac{1}{r}(\varphi_r^\tau(x)) \cdot z \right)
     f(\varphi_r^\tau(x))
\end{equation}
with the initial condition $z(0)=v$.

\begin{proposition}\label{oi}
  For $x\in \widehat M$ and $v\in T_xM$, the maps $t\mapsto y(t)$ and $\tau\mapsto  z(\tau)$ given above satisfy
  \begin{equation}\label{anss}
    z(\tau(x,t)) = y(t) + \varkappa(t) \left(\frac{f}{r}\right)(\varphi^t(x))
  \end{equation}
  for every $t$, where the function $\varkappa\colon \bR\to\bR$ is given by
  \begin{equation}\label{nqlfs}
  \varkappa(t) =
  - \int_0^t D r(\varphi^s(x)) \cdot y(s) \,ds.
  \end{equation}
\end{proposition}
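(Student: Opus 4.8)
The plan is to derive~(\ref{anss}) by differentiating the defining relation~(\ref{F1}), namely $\varphi_r^\tau(x)=\varphi^{t(x,\tau)}(x)$, with respect to the base point $x$ rather than with respect to time. Fix $x\in\widehat M$ and $v\in T_xM$; near such a point the map $(y,\tau)\mapsto t(y,\tau)$ is $C^\ell$ (hence $C^1$ in $y$) and $y\mapsto\varphi^s(y)$ is $C^\ell$, so both sides of~(\ref{F1}) are differentiable in the space variable. Writing the flow as $\Psi(s,y)\eqdef\varphi^s(y)$ and applying the chain rule to $y\mapsto\Psi(t(y,\tau),y)$, using $\partial_s\Psi(s,y)=f(\varphi^s(y))$ from~(\ref{eq1}) and $\partial_y\Psi(s,y)(v)=D_y\varphi^s(v)$, together with the identifications $z(\tau)=D_x\varphi_r^\tau(v)$ and $y(s)=D_x\varphi^s(v)$ from~(\ref{ve_vec}) and the definition of $z$, one obtains
\[
z(\tau)=y(t(x,\tau))+\bigl(D_x[t(\cdot,\tau)](v)\bigr)\,f\!\left(\varphi^{t(x,\tau)}(x)\right).
\]
It then remains to evaluate the scalar $D_x[t(\cdot,\tau)](v)$.

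For this I would differentiate the inverse-function identity $\tau(x,t(x,\tau))=\tau$ with respect to $x$ in the direction $v$, holding $\tau$ fixed. Since $\partial_t\tau(x,t)=r(\varphi^t(x))>0$ on $\widehat M$ by~(\ref{defah}), this yields
\[
D_x[t(\cdot,\tau)](v)=-\frac{D_x[\tau(\cdot,t)](v)\big|_{t=t(x,\tau)}}{r\!\left(\varphi^{t(x,\tau)}(x)\right)}.
\]
Differentiating~(\ref{defah}) under the integral sign and recalling $y(s)=D_x\varphi^s(v)$ gives $D_x[\tau(\cdot,t)](v)=\int_0^t Dr(\varphi^s(x))\cdot y(s)\,ds=-\varkappa(t)$, with $\varkappa$ exactly as in~(\ref{nqlfs}). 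Substituting back produces $D_x[t(\cdot,\tau)](v)=\varkappa(t(x,\tau))/r(\varphi^{t(x,\tau)}(x))$, hence
\[
z(\tau)=y(t(x,\tau))+\varkappa(t(x,\tau))\left(\frac{f}{r}\right)\!\left(\varphi^{t(x,\tau)}(x)\right);
\]
setting $\tau=\tau(x,t)$ and using $t(x,\tau(x,t))=t$ yields~(\ref{anss}).

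The individual steps are short, and the only points requiring care are the justification of the differentiations at regular points — that $t(\cdot,\tau)$ and $\tau(\cdot,t)$ are $C^1$ in the space variable (which follows from the $C^\ell$ regularity of the flow and the $C^{\ell-1}$ regularity of $r$ away from fixed points, with $r\in C^1$ as already needed for~(\ref{kak}) to be meaningful), the legitimacy of differentiating $\int_0^t r(\varphi^s(y))\,ds$ under the integral sign, and the inverse-function relation, which rests on $r(\varphi^t(x))>0$ for $x\in\widehat M$. An alternative route, closer to the spirit of this section, is to verify directly that the reparameterized right-hand side of~(\ref{anss}), i.e. $\tau\mapsto y(t(x,\tau))+\varkappa(t(x,\tau))(f/r)(\varphi_r^\tau(x))$, solves~(\ref{kak}) with initial value $v$ and to invoke uniqueness of solutions of that linear equation; this uses the chain rule for the covariant derivative along a reparameterized curve together with the Leibniz identity $\nabla(f/r)=r^{-1}\nabla f-r^{-2}\,f\otimes\nabla r$, followed by a term-by-term comparison of the $r^{-1}$, $r^{-2}$ and $r^{-3}$ contributions. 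I expect the bookkeeping of these powers of $r$ to be the only laborious part of either route.
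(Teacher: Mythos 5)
Your argument is correct, and your primary route is genuinely different from the paper's. The paper works entirely inside the variational-equation framework: it time-transforms~(\ref{ve}) to get an equation for $\gamma(\tau)\eqdef y(t(x,\tau))$, plugs the ansatz $z=\gamma+\varkappa\,(f/r)(\varphi_r^\tau(x))$ into~(\ref{kak}), uses that $(f/r)(\varphi_r^\tau(x))$ is itself a solution of~(\ref{kak}) to cancel the $\varkappa\,\nabla(f/r)(f/r)$ terms, and reads off a scalar ODE $\dot\varkappa=-D\log r(\varphi_r^\tau(x))\cdot\gamma(\tau)$ whose solution, after the change of variables $s\mapsto\tau(x,s)$, is~(\ref{nqlfs}); this is exactly the ``alternative route'' you sketch at the end, and your anticipated bookkeeping of powers of $r$ is indeed where the paper's short calculation lives. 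Your main route instead differentiates the orbit-equivalence identity~(\ref{F1}) in the base point, so that~(\ref{anss}) falls out of the chain rule, and the coefficient $\varkappa$ is obtained by implicit differentiation of $\tau(x,t(x,\tau))=\tau$ together with differentiation of~(\ref{defah}) under the integral sign. This buys a derivation that needs no covariant derivatives, no uniqueness argument for the linear ODE, and produces the integral formula for $\varkappa$ directly rather than as the solution of an auxiliary equation; what it does not do is exhibit the right-hand side of~(\ref{anss}) explicitly as a solution of~(\ref{kak}), which is the form the paper exploits in the hyperbolic-splitting example. The regularity caveats you flag (openness of $\widehat M$ along the orbit segment, $r\in C^1$ there, invertibility of $t\mapsto\tau(x,t)$ from $r>0$) are exactly the ones needed and are no stronger than what the paper already assumes for~(\ref{kak}) to make sense.
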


\begin{proof}
  By transforming the time in~(\ref{ve}) for $\gamma(\tau)\eqdef y(t(x,\tau))$
  and using relation~(\ref{F1}) and $dt(x,\tau)/d\tau=r(\varphi^{t(x,\tau)}(x))^{-1}$,
  we obtain
  \begin{eqnarray*}
      \frac{D \gamma}{d\tau}
      =
      \frac{dt}{d\tau}\frac{D\gamma}{dt}
        = \nabla\left( \frac{f}{r}\right)(\varphi_r^\tau(x))
         \,\gamma(\tau) -
      (Dr^{-1}(\varphi_r^\tau(x))
      \cdot \gamma(\tau))f(\varphi_r^\tau(x)).
       \end{eqnarray*}
  Making the ansatz
  $  z(\tau) =
  y(t(x,\tau)) + \varkappa(\tau)\left(f/r\right)(\varphi^{t(x,\tau)})
  $  for some function $\varkappa\colon \bR\to\bR$, a short
  calculation gives
  \begin{equation}\label{heu}
      \frac{D \gamma}{d\tau}
      = \nabla\left(\frac{f}{r}\right)(\varphi_r^\tau(x)) \,\gamma(\tau)
         -\dot\varkappa(\tau)\left(\frac{f}{r}\right)(\varphi_r^\tau(x)).
  \end{equation}
  From the above
  and~(\ref{heu}) we obtain that $\varkappa(\tau)$  satisfying
  $\varkappa(0)=0$ is given
  by
  \[
  \varkappa(\tau) = - \int_0^{\tau} D\log r(\varphi_r^s(x)) \cdot y(t(x,s)) \,ds
  \]
  for every $\tau$.
  After a change of variable, we obtain~(\ref{anss}).
\end{proof}

\begin{example}[Hyperbolic splitting]{\rm
Let us consider the example of a vector field $f$ with a $C^1$ Anosov flow $\Phi$
on a compact smooth manifold $M$. This means that there exists
$\lambda>0$ and a Riemannian metric such that for every $x\in M$ there
is a splitting $T_xM=E^u\oplus E^c\oplus E^s$ with $f(x)\in
E^c_x\setminus\{0\}$, $\dim E^c_x=1$,
$D_x\varphi^t E^{u/s}_x=E^{u/s}_{\varphi^t(x)}$ for all $t$, and $|| D_x\varphi^{\mp t}(v^{u/s})|| \le e^{-t\lambda}|| v^{u/s}||$  for every
$v^{u/s}\in E^{u/s}_x\setminus\{0\}$ and $t>0$.
By the Anosov-Sinai theorem~\cite{AnoSin:67}, a change of the time parameterization by means of a positive $C^1$ function $r$ produces again an Anosov flow.
Moreover, the splitting $T_xM=E^u_{x,r}\oplus E^c_{x,r}\oplus E^s_{x,r}$ that is invariant and hyperbolic with respect to the flow $\Phi_r$ can be described explicitly as follows (see~\cite{AnoSin:67,Par:86}). Given $x\in M$, we have
\[
E^{u/s}_{x,r}= \left\{ v + \kappa^{u/s}(v)f(x)\colon v\in E^{u/s}_x\right\} ,
\]
where
\[
\kappa^{u/s}(v) = -\frac{1}{r(x)}\int_0^{\mp\infty}  Dr(\varphi^t(x))\cdot D_x\varphi^t(v)\,dt.
\]
This integral converges uniformly since $v^{u/s}$ is in the unstable/stable subbundle $E^{u/s}_x$ and thus $\kappa^{u/s}$ is continuous. Note that this is in agreement with the proposition above, where~(\ref{anss}) describes how an arbitrary vector $v\in T_xM$ is stretched under $D\varphi_r^t$. Recall that $D_x\varphi_r^t(v)$ converges towards the direction of $E^{u/s}_{\varphi^t_r(x),r}$  as $t\to\pm\infty$  whenever $v\notin E^{s/u}_{x,r}\cup E^c_{x,r}$.
}\end{example}

\section{Transformation of dynamical quantities}\label{sec:5}

\subsection{Lyapunov exponents}\label{lyap}

Given $x\in M$ and $v\in T_xM$, the \emph{forward Lyapunov exponent} of $v$ at
$x$ with respect to the flow $\Phi$ is defined by
\begin{equation}\label{deflya}
  \lambda^+(x,v) \eqdef
  \limsup_{t\to\infty}\frac{1}{t}\log\,|| D_x\varphi^t(v)||,
\end{equation}
with the convention that $\log 0=-\infty$.
Here the  norm $||\cdot||$ is the one derived from the
scalar-product structure on the tangent bundle induced by the Riemannian
metric.
Analogously, the \emph{backward Lyapunov exponent} $\lambda^-(x,v)$ is defined
by replacing the map $\varphi^t$ in~(\ref{deflya}) with $\varphi^{-t}$.
For every point $x\in M$ that is \emph{Lyapunov regular} with respect
to $\Phi$, there exists a positive integer $k(x)\le \dim
M$ and a unique splitting
\begin{equation}\label{spki}
T_xM = \bigoplus_{i=1}^{k(x)}E^i_x
\end{equation}
of the tangent space, where each linear subspace $E^i_x$ depends measurably on
$x$ and satisfies $D_x\varphi^t(E^i_x) = E^i_{\varphi^t(x)}$ for every
$t\in\bR$. Moreover, there
are numbers $\lambda_1(x)< \cdots < \lambda_{k(x)}(x)$ such that for each
$v\in E^i_x$ we have
\[
\lim_{t\to\pm\infty}\frac{1}{t}\log\,|| D_x\varphi^t(v)|| =
\lambda^+(x,v) =-\lambda^-(x,v) = \lambda_i(x).
\]
In addition, for every $v\in E^i_x$ and $w\in E^j_x$ with $i\ne j$, we have
\begin{equation}\label{lyez}
  \lim_{t\to\pm\infty}\frac{1}{t}\log\,
  |\sin\angle(D_x\varphi^t(v),D_x\varphi^t(w))| =0,
\end{equation}
i.e., the angles between the two subspaces $E^i_x$ and $E^j_x$ can go to zero
at most sub-exponentially along the orbit of $x$ (see~\cite{BarPes:05} for
details on Lyapunov regularity).

 By the multiplicative ergodic
theorem, the set of Lyapunov regular points has full measure with respect to any $\mu\in \cM(\Phi)$.
Furthermore, the functions $x\mapsto \lambda_i(x)$, $\dim E^i_x$, and $k(x)$ are $\mu$-measurable
and $\Phi$-invariant $\mu$-almost everywhere, and hence are constant $\mu$-almost everywhere if $\mu$ is ergodic. Here invariant means that we have $\lambda_i(x)=\lambda_i(\varphi^s(x))$ for every $s$. The numbers $\lambda_i(x)$ are called the \emph{Lyapunov exponents} of $\Phi$ at $x$ and $\dim E^i_x$ is their multiplicity. For the remainder of the paper we use the same notation for the Lyapunov exponents to account for multiplicities:
\[
\lambda_1(x)\le\lambda_2(x)\le\ldots\le\lambda_{\dim M}(x).
\]
Given $\mu\in\cM(\Phi)$, we also consider
\begin{equation}\label{deflyame}
  \lambda_i(\mu) \eqdef \int_M\lambda_i(x)\, d\mu(x).
\end{equation}

Let us denote by $\cR\subset\widehat M$ the set of Lyapunov regular points such that  there exists a positive integer number $j(x) \le \dim M$ for which $\dim E^{j(x)}_x=1$ and $\lambda_{j(x)}(x)=\lambda(x,f(x))$ is the exponent in the direction of $f(x)$.
We now consider the case where
at every Lyapunov regular point $x\in M$ all Lyapunov
exponents except the one related to the direction of the vector field are non-zero.
More precisely, we assume that for every $x\in \cR$ there exists a number $j(x)$
such that $\dim E^{j(x)}_x=1$,  the exponent $\lambda_{j(x)}(x)$ is associated to the
direction of the flow,  and
\begin{equation}\label{thun}
\lambda_1(x)\le\cdots\le\lambda_{j(x)-1}(x) < \lambda_{j(x)}(x)=0
<\lambda_{j(x)+1}(x)\le\cdots\le\lambda_{\dim M}(x).
\end{equation}
This also includes the case that all exponents except the one related to the flow direction are positive (or negative).
A $\Phi$-invariant measure $\mu$ is called \emph{hyperbolic} if~(\ref{thun})
holds for $\mu$-almost every point $x$, in which case $\cR$ is necessarily of full measure.

Given $x\in \widehat M$, we define
\begin{equation}\label{nao}
   r_\pm(x)  \eqdef \lim_{t\to\pm\infty}\frac{1}{t} \int_0^t r(\varphi^s(x))\,ds
\end{equation}
whenever the limit exists.

\begin{proposition}\label{prop1}
  For every point $x\in \cR\subset\widehat M$ that is Lyapunov regular with respect to
  $\Phi$ and $\Phi_r$ and satisfies $0<r_\pm(x)<\infty$ and every $v\in T_xM\setminus \{0\}$   we have
  \begin{equation}\label{delma}
   \lim_{t\to\pm\infty}\frac{1}{t}
    \log\,|| D_x\varphi_r^{ \tau(x,t)}(v)||
    =
    \lim_{t\to\pm\infty}\frac{1}{t}
    \log\,|| D_x\varphi^t(v)||
     = \pm\,\lambda^\pm(x,v).
  \end{equation}
\end{proposition}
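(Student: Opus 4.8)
The plan is to derive everything from Proposition~\ref{oi}. The second equality in~\eqref{delma} is essentially the definition of Lyapunov regularity: at a Lyapunov regular point the $\limsup$ in~\eqref{deflya} is a genuine limit, equal to $\lambda^+(x,v)$ for $t\to+\infty$ and to $-\lambda^-(x,v)$ for $t\to-\infty$, as recorded by the Oseledets decomposition~\eqref{spki}. So the real content is the first equality, namely that replacing $\varphi^t$ by $\varphi_r^{\tau(x,t)}$ does not change the exponential rate measured in $t$. Note that $0<r_\pm(x)<\infty$ together with~\eqref{defah} and~\eqref{nao} gives $\tau(x,t)/t\to r_\pm(x)$ as $t\to\pm\infty$, so once the rates in $t$ are known for $\Phi_r$ one also recovers that the Lyapunov exponents of $\Phi_r$ at $x$ are those of $\Phi$ divided by $r_\pm(x)$; but for~\eqref{delma} as stated only the rate in $t$ is needed.

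By Proposition~\ref{oi}, writing $y(t)=D_x\varphi^t(v)$ and $z(\tau)=D_x\varphi_r^\tau(v)$,
\[
  D_x\varphi_r^{\tau(x,t)}(v)=z(\tau(x,t))=y(t)+\varkappa(t)\left(\frac{f}{r}\right)(\varphi^t(x)),
\]
with $\varkappa$ as in~\eqref{nqlfs}. Hence the problem reduces to showing that the correction term $\varkappa(t)(f/r)(\varphi^t(x))$ never dominates $y(t)$ exponentially, in either time direction. The key structural point is that $x\in\cR$, so $f(x)\in E^{j(x)}_x$ with $\dim E^{j(x)}_x=1$; consequently $(f/r)(\varphi^t(x))=r(\varphi^t(x))^{-1}D_x\varphi^t(f(x))$ stays in the single Oseledets line $E^{j(x)}_{\varphi^t(x)}$. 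Decomposing $v=\sum_i v_i$ along~\eqref{spki}, the correction term therefore perturbs only the $E^{j(x)}$-component of $y(t)$ and leaves every other component $D_x\varphi^t(v_i)$, $i\ne j(x)$, untouched. Since by~\eqref{lyez} the angles between distinct Oseledets subspaces decay subexponentially along the orbit, the exponential rate of a sum of vectors lying in Oseledets subspaces of distinct index equals the largest rate among the summands; hence
\[
  \lim_{t\to+\infty}\frac{1}{t}\log\|D_x\varphi_r^{\tau(x,t)}(v)\|=\max\big(\max_{i\ne j(x),\,v_i\ne 0}\lambda_i(x),\ \ell_+(x,v)\big),
\]
where $\ell_+(x,v)\eqdef\limsup_{t\to\infty}\frac{1}{t}\log\|D_x\varphi^t(v_{j(x)})+\varkappa(t)(f/r)(\varphi^t(x))\|$.

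It remains to identify $\ell_+(x,v)$ and to check the $\max$ equals $\lambda^+(x,v)$. When the expanded direction of $v$ is not the flow line the first argument already equals $\lambda^+(x,v)$ and one needs only $\ell_+(x,v)\le\lambda^+(x,v)$; when it is, one needs $\ell_+(x,v)\le\lambda_{j(x)}(x)=\lambda^+(x,v)$ together with the lower bound coming from $D_x\varphi^t(v_{j(x)})$. For the upper bound on $\ell_+$ one estimates $|\varkappa(t)|\le\int_0^{|t|}\|Dr(\varphi^s(x))\|\,\|D_x\varphi^s(v)\|\,ds$; assuming, as in the case $\widehat M=M$ singled out after~\eqref{defm} on a compact $M$, that $r$, $1/r$ and $\|Dr\|$ are bounded along the orbit, this gives $\limsup_{t\to\infty}\frac{1}{t}\log|\varkappa(t)|\le\max(0,\lambda^+(x,v))$, while $\|(f/r)(\varphi^t(x))\|$ grows at the rate $\lambda_{j(x)}(x)=\lambda(x,f(x))$. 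Combining these and using that the flow line carries the exponent $\lambda_{j(x)}(x)$, which in the hyperbolic situation~\eqref{thun} and in all the cases of physical interest is $0$, yields $\ell_+(x,v)\le\lambda^+(x,v)$. The backward statement follows from the same computation with $t\to-\infty$, integrating $Dr(\varphi^s(x))\cdot D_x\varphi^s(v)$ over $[t,0]$.

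I expect the genuinely delicate point to be the subcase in which $v$ is carried entirely by directions with exponent $\le\lambda_{j(x)}(x)$: there no strictly expanding component of $v$ survives in $z(\tau(x,t))$, $\varkappa(t)$ and $D_x\varphi^t(v_{j(x)})$ are of comparable size, and one must rule out that the correction term overwhelms $D_x\varphi^t(v_{j(x)})$. This is precisely where I expect the hypotheses $x\in\cR$ and $0<r_\pm(x)<\infty$ to do the work — the latter controlling $r(\varphi^t(x))$, and hence $(f/r)(\varphi^t(x))$, in the Cesàro sense along the orbit. Everything else is bookkeeping with the Oseledets blocks~\eqref{spki}, the subexponential angle estimate~\eqref{lyez}, and the elementary comparison of $\log\|a+b\|$ with $\log\max(\|a\|,\|b\|)$ for vectors $a,b$ in Oseledets subspaces of distinct index.
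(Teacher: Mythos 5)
Your reduction to Proposition~\ref{oi} and the decomposition of $v$ into Oseledets blocks match the paper's starting point, but the argument does not close in precisely the subcase you yourself flag as delicate, and the paper resolves that subcase by a device you never invoke. Concretely: when every nonzero component $v_i$ of $v$ has $\lambda_i(x)\le \lambda_{j(x)}(x)=0$, the bound $\limsup_{t\to\infty}\frac1t\log|\varkappa(t)|\le\max(0,\lambda^+(x,v))$ is the best your estimate can give, and it is not enough. Indeed, if $\lambda^+(x,v)<0$ then by \eqref{nqlfs} the integral defining $\varkappa(t)$ converges (under your own boundedness assumptions on $\|Dr\|$), so $\varkappa(t)$ tends to a generically nonzero constant while $\|(f/r)(\varphi^t(x))\|$ has exponential rate $\lambda_{j(x)}(x)=0$; the correction term in \eqref{anss} then has rate $0>\lambda^+(x,v)$, and your asserted inequality $\ell_+(x,v)\le\lambda^+(x,v)$ is exactly what needs to be proved, not a consequence of the estimates you wrote. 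This is not a quibble: in the Anosov example following Proposition~\ref{oi}, a vector $v\in E^s_x$ with $\kappa^s(v)\ne 0$ acquires a genuine flow-direction component in the $\Phi_r$-splitting, so controlling this correction is the entire content of the contracting case. You also import hypotheses absent from the statement ($r$, $1/r$, $\|Dr\|$ bounded along the orbit) and never use the hypothesis that $x$ is Lyapunov regular with respect to $\Phi_r$, which is the one doing the work in the paper.

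The paper's proof sidesteps estimating $\varkappa$ in the main case. Since $z(\tau(x,t))-y(t)$ is parallel to $f(\varphi^t(x))$ by \eqref{anss}, the components of $\zeta(t)\eqdef D_x\varphi_r^{\tau(x,t)}(v)$ and of $y(t)$ orthogonal to the flow direction coincide, giving the exact identity $\|\zeta(t)\|\,|\sin\angle(\zeta(t),f(\varphi^t(x)))|=\|y(t)\|\,|\sin\angle(y(t),f(\varphi^t(x)))|$. The second angle is subexponential by \eqref{lyez} applied to $\Phi$; the first is subexponential in $\tau$ by \eqref{lyez} applied to $\Phi_r$ --- this is where regularity with respect to $\Phi_r$ enters --- and is converted to a subexponential rate in $t$ via $0<r_\pm(x)<\infty$. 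The remaining case $v=f(x)$ is a one-line computation, $\|D_x\varphi_r^{\tau(x,t)}(f(x))\|=(r(x)/r(\varphi^t(x)))\,\|f(\varphi^t(x))\|$, again using $0<r_\pm(x)<\infty$ to kill the rate of $\log r(\varphi^t(x))$. To salvage your route you would have to show directly how the limiting vector $\varkappa(\infty)(f/r)$ interacts with the $\Phi_r$-Oseledets splitting at $x$, which amounts to re-deriving the paper's appeal to regularity for $\Phi_r$; as written, your proof establishes \eqref{delma} only for vectors whose fastest component is not slower than the flow direction.
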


\begin{proof}
  Take $x\in \cR$ to be a point that is Lyapunov regular with respect to
  $\Phi$ and $\Phi_r$, and let $v\in T_xM\setminus \{0\}$.
  Denote $y(t)= D_x\varphi^t(v)$ and let  $\zeta(t)\eqdef
  D_x\varphi_r^{ \tau(x,t)}(v)$. From the Lyapunov regularity, it follows that
  the limits
  \begin{equation}\label{ham}
    \lambda^+(x,v)=
    -\lambda^-(x,v)=
    \lim_{t\to\pm\infty}\frac{1}{t}\log\,|| y(t)||
  \end{equation}
  exist.
  We first consider the case in which $v$ is in one of the remaining subspaces $E^i_x$, $i\ne j(x)$, which implies $\angle(v,f(x))\ne 0$. From relation~(\ref{anss}), it
  follows that
  \begin{equation}\label{alh}
  ||\zeta(t)|| =
  \frac{|\sin\angle(y(t),f(\varphi^t(x)))|}
       {|\sin\angle(\zeta(t),f(\varphi^t(x)))|}
       || y(t)||
  \end{equation}
  for every $t\in\bR$. Using~(\ref{lyez}), we get
  \begin{equation}\label{chwa}
  \lim_{t\to\pm\infty}\frac{1}{t}
  \log\,|\sin\angle(y(t),f(\varphi^t(x)))|
  =0.
  \end{equation}
  Note that for every $t\in\bR$ and every $x\in \widehat M$ we have
  $\varphi^t(x)=\varphi_r^{ \tau(x,t)}(x)$.
  Since $D_x\varphi_r^s((f/r)(x)) = (f/r)(\varphi_r^s(x))$ for every $s\in\bR$, we obtain
  \[
  |\sin\angle(D_x\varphi_r^{ \tau(x,t)}(v),
                   D_x\varphi_r^{ \tau(x,t)}(f(x)))|
  =  |\sin\angle(\zeta(t),f(\varphi^t(x)))|.
  \]
  From property~(\ref{lyez}) applied to the flow $\Phi_r$ we
  obtain
  \[
  \lim_{t\to\pm\infty}
  \frac{1}{ \tau(x,t)}\log\,
  |\sin\angle(\zeta(t),f(\varphi^t(x)))| = 0
  \]
  (note that $ \tau(x,-t)= -\int_0^t r(\varphi^{-s}(x))ds$).
  Using~(\ref{nao}) and $0<r_\pm(x)<\infty$, this leads to
  \begin{equation}\label{sko}
  \lim_{t\to\pm\infty}\frac{1}{t}\log\,
  |\sin\angle(\zeta(t),f(\varphi^t(x)))| = 0.
  \end{equation}
  Using (\ref{chwa}) and (\ref{sko}) in (\ref{alh}), we obtain
  \[
  \lim_{t\to\pm\infty}\frac{1}{t}\log\,||\zeta(t)||
  = \lim_{t\to\pm\infty}\frac{1}{t}\log\,|| y(t)||.
  \]
  In the other case, $v = f(x)$,  we have
  $  || D_x\varphi_r^{\tau(x,t)}(v)|| =
  r(x)/r(\varphi^t(x))
   || f(\varphi^t(x))||.
  $
  From $0<r_\pm(x)<\infty$, it follows that $\lim_{t\to\pm\infty}\frac{1}{t}\log
  r(\varphi^t(x))=0$.
  Hence, we obtain $\lambda^\pm(x,v) =\pm\lim_{t\to\pm\infty}\frac{1}{t}
  \log\,|| D_x\varphi_r^{\tau(x,t)}(v)|| $ for every vector $v\in
  T_xM\setminus\{0\}$ parallel to $f(x)$.
  The observation that an arbitrary $v$ can be split into components in subspaces $E^i_x$ completes the proof.
\end{proof}

\begin{remark}\label{rem:stra}{\rm
If condition $0<r_\pm(x)<\infty$ in Proposition~\ref{prop1} is relaxed, the statement remains valid  for any $v\in E^i_x$ and any $i\ne j(x)$, while for $v=f(x)$ the right-hand side of (\ref{delma})  has to be replaced by $ \pm\,\lambda^\pm(x,f(x))-R_\pm(x)$, where
    \[
    R_\pm(x) \eqdef \lim_{t\to\pm\infty}\frac{1}{t}\log r(\varphi^t(x)).
   \]
   Note that $R_\pm(x)\le0 $ if $r_\pm(x)<\infty$ and  $R_\pm(x)\ge0$  if $r_\pm(x)>0$.
   Moreover, if $\log r\in L^1(\mu)$ then $R_\pm(x)=0$ for $\mu$-almost every $x$.
}\end{remark}

Given a point $x$ that is Lyapunov regular with respect to the
flow $\Phi_r$, we use the notation
$\lambda^r_i(x)$ to distinguish the values of the Lyapunov exponents from those corresponding
to $\Phi$.

\begin{theorem}\label{theo.lyap}
For every point $x\in\cR\subset \widehat M$ that is Lyapunov regular with respect to
  $\Phi$ and $\Phi_r$ and satisfies $0<r_+(x)<\infty$, we have 	
  \begin{equation}\label{hi}
  \lambda^ r_i(x)\, r_+(x)= \lambda_i(x), \quad i=1,\ldots,\dim M,
  \end{equation}
  and hence the number of positive, null, and negative Lyapunov exponents at $x$ remains the same.
 \end{theorem}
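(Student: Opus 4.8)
The plan is to deduce Theorem~\ref{theo.lyap} directly from Proposition~\ref{prop1} together with the definition of $ \tau(x,t)$. First I would fix a point $x\in\cR$ that is Lyapunov regular for both $\Phi$ and $\Phi_r$ with $0<r_+(x)<\infty$, and recall that by definition $ \tau(x,t)=\int_0^t r(\varphi^s(x))\,ds$, so that~(\ref{nao}) gives $\lim_{t\to+\infty} \tau(x,t)/t = r_+(x)\in(0,\infty)$. In particular $ \tau(x,t)\to+\infty$ as $t\to+\infty$, so the reparameterized time sweeps out all of $[0,\infty)$, and $t\mapsto \tau(x,t)$ is a genuine increasing bijection onto $[0,\infty)$ with inverse $\tau\mapsto t(x,\tau)$ satisfying $t(x,\tau)/\tau\to 1/r_+(x)$ as $\tau\to+\infty$.

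Next I would compute the $\Phi_r$-Lyapunov exponent of $v$ along $x$ by a change of clock. For $v\in T_xM\setminus\{0\}$ we have, using $\varphi_r^{ \tau}(x)=\varphi^{t(x,\tau)}(x)$,
\begin{align*}
\lambda^{r,+}(x,v)
&= \limsup_{\tau\to+\infty}\frac{1}{\tau}\log\,\|D_x\varphi_r^\tau(v)\|
= \limsup_{t\to+\infty}\frac{1}{ \tau(x,t)}\log\,\|D_x\varphi_r^{ \tau(x,t)}(v)\|\\
&= \limsup_{t\to+\infty}\frac{t}{ \tau(x,t)}\cdot\frac{1}{t}\log\,\|D_x\varphi_r^{ \tau(x,t)}(v)\|
= \frac{1}{r_+(x)}\,\lambda^+(x,v),
\end{align*}
where in the last step I substitute $t/ \tau(x,t)\to 1/r_+(x)$ and invoke Proposition~\ref{prop1}, which guarantees that $\lim_{t\to+\infty}\frac1t\log\|D_x\varphi_r^{ \tau(x,t)}(v)\|$ exists and equals $\lambda^+(x,v)$. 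This justifies replacing the $\limsup$ over a reparameterized time variable by an ordinary limit along the sequence $ \tau(x,t)$: since the latter limit exists, and since $ \tau(x,\cdot)$ is continuous, increasing, and surjective onto $[0,\infty)$, the two are equal (the $\limsup$ over $\tau$ is computed along any sequence going to infinity, and here it is monotone enough — formally one notes $\|D_x\varphi_r^{\tau_1+\tau_2}(v)\|\le \|D_x\varphi_r^{\tau_1}\|_{\rm op}\,\|D_x\varphi_r^{\tau_2}(v)\|$ and that $\sup_{|\tau'-\tau''|\le C}\frac1\tau\log\|D_x\varphi_r^{\tau'-\tau''}\|_{\rm op}\to 0$, so the exponent does not see the bounded gaps between consecutive values $ \tau(x,t)$).

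Now, since $x$ is Lyapunov regular for $\Phi_r$, applying the displayed identity to a vector $v\in E^{r,i}_x$ in the $i$-th Oseledets subspace of $\Phi_r$ would give $\lambda^r_i(x)=\lambda^{r,+}(x,v)=\frac{1}{r_+(x)}\lambda^+(x,v)$; the point is that $\lambda^+(x,v)$ for that same $v$ is one of the numbers $\lambda_j(x)$, and as $v$ ranges over an orthonormal basis adapted to the $\Phi_r$-splitting the multiset $\{\lambda^{r,+}(x,v)\}$ is exactly $\{\lambda^r_i(x)\}$ with multiplicity, while $\{\lambda^+(x,v)\}=\{\lambda_i(x)\}$ with multiplicity — the scaling by $1/r_+(x)$ being a fixed bijection of $\bR$ preserving order, it matches the sorted lists term by term, yielding $\lambda^r_i(x)\,r_+(x)=\lambda_i(x)$ for each $i=1,\dots,\dim M$. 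Finally, since $r_+(x)>0$, multiplication by $r_+(x)$ preserves the sign of each exponent, so the counts of positive, zero, and negative Lyapunov exponents agree for $\Phi$ and $\Phi_r$. The main obstacle I anticipate is the bookkeeping that identifies the {\it sorted, with-multiplicity} list of $\Phi_r$-exponents with the image under $\lambda\mapsto\lambda/r_+(x)$ of the sorted $\Phi$-list: one must check that the Oseledets splittings for $\Phi$ and $\Phi_r$ are compatible enough — which follows because Proposition~\ref{prop1} shows every nonzero $v$ has the same growth rate (up to the factor $r_+(x)$) under both flows, forcing the flag of subspaces $\{v: \lambda^{r,+}(x,v)\le c\,r_+(x)\}$ to coincide with $\{v:\lambda^+(x,v)\le c\}$, hence equal dimensions, hence equal multiplicities.
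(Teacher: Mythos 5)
Your proposal is correct and follows essentially the same route as the paper's proof: substitute $\tau=\tau(x,t)$, use $\tau(x,t)/t\to r_+(x)$ together with Proposition~\ref{prop1} to convert the $\Phi_r$-exponent into $\lambda^+(x,v)/r_+(x)$, and then invoke Lyapunov regularity. The only difference is that you spell out the identification of the two Oseledets filtrations and the matching of multiplicities, which the paper compresses into the single sentence ``Lyapunov regularity and Proposition~\ref{prop1} imply\ldots''; that extra bookkeeping is a welcome clarification rather than a deviation.
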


\begin{proof}
Let $v\in T_xM$ and denote $\zeta(t) \eqdef
  D_x\varphi_r^{ \tau(x,t)}(v)$.
  Note that $ \tau(x,t)\to\infty$ as $t\to\infty$ and hence
  \begin{eqnarray*}
  \lim_{t\to\infty}\frac{1}{t}\log\,|| \zeta(t)||
  = r_+(x)
  \lim_{\tau\to\infty}\frac{1}{\tau}\log\,|| D_x\varphi_r^\tau(v)||.
  \end{eqnarray*}
  Lyapunov regularity and Proposition~\ref{prop1} imply that the Lyapunov exponents at $x$ with respect to $\Phi_r$  satisfy $\lambda_i^r(x) \, r_+(x)= \lambda_i(x)$.
   Note that the joint Lyapunov regularity automatically implies that $r_+(x)=r_-(x)$. This proves the theorem.
 \end{proof}

\begin{remark}\label{rem:strabn}{\rm
Let us consider the case in which the condition $0<r_+(x)<\infty$ in Theorem~\ref{theo.lyap} is relaxed.
It follows from the proof that for every Lyapunov regular point, $r_+(x)=0$ implies that we must have $\lambda_i(x) = 0$, $i=1$,$\ldots$, $\dim M$ (see also Example~\ref{ex:Lyap}). Analogously, if $r_+(x)=\infty$ then we have $\lambda_i^r(x) = 0$, $i=1$,$\ldots$, $\dim M$ (compare with Example~\ref{ex:suspen} below).
}\end{remark}

The following is a consequence of Theorem~\ref{theo.lyap}.

\begin{corollary}\label{cor.hyp}
  Let $\mu\in\cM(\Phi)$ be hyperbolic and  let $r\in L^1(M,\mu)$.
 For the measure $\mu_r$ given by (\ref{sua}) we have
  \begin{equation}\label{hott}
  \lambda_i^ r(\mu_ r)  \, {\int_M r \,d\mu} = \lambda_i(\mu), \quad
  i=1,\ldots,\dim M.
  \end{equation}
  Therefore, $\mu_r\in\cM(\Phi_r)$ is hyperbolic if $r$ is positive $\mu$-almost everywhere.
\end{corollary}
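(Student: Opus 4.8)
The plan is to derive Corollary~\ref{cor.hyp} from Theorem~\ref{theo.lyap} by integrating the pointwise identity~(\ref{hi}) against the measure $\mu$, after checking that the hypotheses of the theorem hold $\mu$-almost everywhere. First I would note that since $\mu$ is hyperbolic, condition~(\ref{thun}) holds $\mu$-a.e., so $\cR$ has full $\mu$-measure; by the multiplicative ergodic theorem applied to both $\Phi$ and $\Phi_r$ the set of points that are jointly Lyapunov regular with respect to both flows also has full $\mu$-measure (and hence full $\mu_r$-measure, since $\mu_r \ll \mu$). It remains to verify $0 < r_+(x) < \infty$ for $\mu$-a.e.\ $x$. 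Finiteness follows from the Birkhoff ergodic theorem: since $r \in L^1(M,\mu)$, the limit $r_+(x) = \lim_{t\to\infty} \frac1t \int_0^t r(\varphi^s(x))\,ds$ exists and is finite $\mu$-a.e., with $\int_M r_+(x)\,d\mu(x) = \int_M r\,d\mu$. Positivity is where the hypothesis ``$r$ positive $\mu$-a.e.'' enters: if $r>0$ $\mu$-a.e., then $\mu(\{r=0\})=0$, and by Birkhoff applied to the indicator $\mathbf 1_{\{r=0\}}$ the orbit of $\mu$-a.e.\ point spends asymptotic frequency zero in $\{r=0\}$, so the ergodic average of the nonnegative function $r$ is strictly positive $\mu$-a.e.; thus $r_+(x)>0$ $\mu$-a.e.

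With the hypotheses verified on a full-measure set, Theorem~\ref{theo.lyap} gives $\lambda_i^r(x)\,r_+(x) = \lambda_i(x)$ for $\mu$-a.e.\ $x$ and each $i$. The second step is to integrate. Using the change-of-measure formula~(\ref{defahh}) with the continuous function $\xi$ replaced (by a standard approximation argument, or directly from $d\mu_r = r\,d\mu / \int_M r\,d\mu$) by the measurable function $\lambda_i^r$, and writing $\lambda_i^r(x) = \lambda_i(x)/r_+(x)$, one obtains
\begin{equation*}
\lambda_i^r(\mu_r) = \int_M \lambda_i^r \,d\mu_r
= \frac{1}{\int_M r\,d\mu}\int_M \lambda_i^r \cdot r \, d\mu.
\end{equation*}
The cleanest route from here is to observe that the function $x\mapsto \lambda_i(x)$ is $\Phi$-invariant, hence so is $\lambda_i^r(x) = \lambda_i(x)/r_+(x)$ (note $r_+$ is also $\Phi$-invariant where defined); therefore integrating the relation $\lambda_i^r(x)\,r_+(x) = \lambda_i(x)$ and using the invariance of $\lambda_i^r$ together with $\int_M r_+\,d\mu = \int_M r\,d\mu$ requires a small ergodic-decomposition argument to move $r_+$ past $\lambda_i^r$. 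Concretely, decomposing $\mu = \int \nu \,d\mathbb{P}(\nu)$ into ergodic components, on each ergodic $\nu$ the functions $\lambda_i$, $\lambda_i^r$, and $r_+$ are constant $\nu$-a.e., with $r_+ = \int_M r\,d\nu$ and $\lambda_i^r = \lambda_i/r_+$; then one computes $\int \lambda_i^r\,d\mu_r$ component-wise and reassembles. This yields $\lambda_i^r(\mu_r)\int_M r\,d\mu = \lambda_i(\mu)$, which is~(\ref{hott}).

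Finally, for the hyperbolicity of $\mu_r$: the number of positive, null, and negative Lyapunov exponents of $\Phi_r$ at $\mu_r$-a.e.\ point equals that of $\Phi$ by the last clause of Theorem~\ref{theo.lyap} (valid since $0<r_+<\infty$ $\mu_r$-a.e.\ when $r>0$ $\mu$-a.e.), and the exponent associated to the flow direction of $\Phi_r$ is still the one that vanishes because the flow direction is preserved by orbit equivalence; hence~(\ref{thun}) holds for $\Phi_r$ at $\mu_r$-a.e.\ point, i.e.\ $\mu_r$ is hyperbolic.

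I expect the main obstacle to be the bookkeeping around integrating a pointwise product when one factor, $r_+(x)$, is itself an ergodic average rather than a constant: the identity~(\ref{defahh}) is stated for continuous $\xi$, so one must either extend it to the relevant measurable functions or, as above, pass through the ergodic decomposition where everything becomes constant on each component. The positivity of $r_+$ $\mu$-a.e.\ (as opposed to merely $r_+ \geq 0$) also deserves the brief Birkhoff argument indicated, since without it $\lambda_i^r$ need not be finite.
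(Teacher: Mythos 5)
Your overall strategy---verify the hypotheses of Theorem~\ref{theo.lyap} on a set of full measure, apply the pointwise identity~(\ref{hi}), and integrate---is the right one and matches the paper's. But your argument for the positivity of $r_+$ is a non sequitur. From Birkhoff applied to $\mathbf 1_{\{r=0\}}$ you correctly get that a.e.\ orbit spends asymptotic frequency zero in $\{r=0\}$, but this does \emph{not} imply $r_+(x)>0$: a nonnegative function can be strictly positive along an entire orbit and still have vanishing time average (think of $s\mapsto r(\varphi^s(x))$ decaying integrably, as in Example~\ref{ex:Lyap}, where $r>0$ off a single point yet $r_+\equiv 0$). The correct one-line fix is to use that $r_+$ is $\Phi$-invariant, so $A\eqdef\{r_+=0\}$ is an invariant set and Birkhoff restricted to $A$ gives $\int_A r\,d\mu=\int_A r_+\,d\mu=0$; if $r>0$ $\mu$-a.e.\ this forces $\mu(A)=0$. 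Note also that you have imported the hypothesis ``$r>0$ $\mu$-a.e.'' into the proof of~(\ref{hott}), which the statement reserves for the second assertion only; for~(\ref{hott}) you can instead get $\mu(\{r_+=0\})=0$ for free from the hyperbolicity of $\mu$ via Remark~\ref{rem:strabn}, since $r_+(x)=0$ would force all exponents at $x$ to vanish, contradicting~(\ref{thun}).

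Apart from this, your proof is sound, and the integration step is where you genuinely diverge from the paper. You pass through the ergodic decomposition, on each component of which $\lambda_i$, $\lambda_i^r$, and $r_+$ are constants with $r_+=\int r\,d\nu$, so the factor $r_+$ cancels cleanly against $\int r\,d\nu$ and the pieces reassemble to~(\ref{hott}). The paper instead works with $\mu$ directly: it uses the invariance of $\lambda_i^r$ and $r_+$ to replace $r(x)$ by $r(\varphi^s(x))$ in $\int\lambda_i^r\,r\,d\mu$, averages over $s\in[0,t]$, and lets $t\to\infty$ using the convergence of the Birkhoff averages of $r$ to $r_+$. Your route is arguably cleaner, since it avoids the interchange of limit and integral in the paper's final estimate (which tacitly needs some domination of $\lambda_i^r$); the cost is invoking the ergodic decomposition theorem and checking that $\nu\mapsto\lambda_i(\nu)$ integrates back to $\lambda_i(\mu)$, which is immediate from~(\ref{deflyame}). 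Your treatment of the final hyperbolicity claim for $\mu_r$ (sign pattern preserved by Theorem~\ref{theo.lyap}, zero exponent still carried by the flow direction since $D_x\varphi_r^\tau$ maps $f/r$ to itself along orbits) is correct.
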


\begin{proof}
  In order to prove (\ref{hott}), what remains to
  be shown is that for every $i=1,\ldots,\dim M$ we have
  \begin{equation}\label{hutt}
  \int_M r \,d\mu\int_M\lambda_i^ r(x)\,d\mu_ r(x) = \lambda_i(\mu).
  \end{equation}
  By the Birkhoff  theorem, the  limit (\ref{nao}) exists $\mu$-almost everywhere and is finite, and $r_+\in L^1(M,\mu)$.
  By the multiplicative ergodic theorem, the set of Lyapunov regular points
  with respect to any of the flows $\Phi$ and $\Phi_r$ has
  full measure.
  By (\ref{sua}) we have
  \[
  \int_M r \,d\mu\int_M \lambda_i^r(x) \, d\mu_ r(x) =
  \int_M \lambda_i^r(x) \, r(x)\,d\mu(x).
  \]
  Recall that the function $x\mapsto r_+(x)$ is $\mu$-measurable and invariant according to the Birkhoff theorem,
  and
  that $x\mapsto \lambda_i(x)$ is $\mu$-measurable and invariant. Here
  invariance implies that we have $r_+(x)=r_+(\varphi^s(x))$ and
  $\lambda_i(x)=\lambda_i(\varphi^s(x))$ for every $s$, and thus
  \[
  \int_M r \,d\mu\int_M \lambda_i^ r(x)\,  d\mu_ r(x)
  = \int_M \lambda_i^r(x) \,  r(\varphi^s(x))\,d\mu(x).
  \]
  Further, by the first claim of the theorem, we can conclude that
  \begin{eqnarray*}
   &&\Big| \lambda_i(\mu) -
    \int_M \lambda_i^r(x)\, r(\varphi^s(x))\,d\mu(x)\Big|
    \\
    &&= \left| \int_M\lambda_i(x)\,d\mu(x) -
    \int_M\lambda_i^r(x)\, r(\varphi^s(x))\,d\mu(x)\right|
    \\
    &&= \left| \int_M\lambda_i^r(x)\, r_+(x)\,d\mu(x) -
    \int_M\lambda_i^r(x)\, r(\varphi^s(x))\,d\mu(x)\right|
    \\
    &&\le \int_M \lambda_i^r(x)   \left|  r_+(x) -
         \frac{1}{t}\int_0^t r(\varphi^s(x))\, ds\right| \,d\mu(x)
                   \to 0 \,\,{\rm as }\,\,t\to\infty.
  \end{eqnarray*}
   Thus it follows   that (\ref{hutt}) holds true, and this proves the theorem.
\end{proof}

We illustrate the above results with some explicit constructions.

\begin{example}[Lyapunov exponents for singular transformations]\label{ex:Lyap}{\rm
We consider the flow
        $\Phi=\{\varphi^t\}_{t\ge0}$ generated by the vector field
    $f(x)=-x| x|$ for every $x\in\bR$ and the time transformed flow
    $\Phi_r$ with the
    associated function $r(x)=| x|$, which has a
    stable fixed point $x=0$. For every $x\in \bR$ we have
    $\lambda(x)=0$ and ${\lambda^r}(x)=-1$. Note that $r_+(x)=0$ and
    $R_+(x)=0$ (this is not in conflict with Remark~\ref{rem:stra} since in (\ref{delma}) the $1/t$-factor  is not time-transformed).  This demonstrates that a singular time reparameterization with $r_+(x)=0$ may create non-zero Lyapunov exponents in the time-transformed flow.
}\end{example}

\begin{example}[Elimination of non-zero exponents]\label{ex:suspen}{\rm
	Let us consider the hyperbolic toral automorphism $h\colon\bT^2\to\bT^2$ of the two-torus given by $h(x,y)=(2x+y,x+y)\,{\rm mod}\, 1$. This flow is ergodic with respect to the Lebesgue measure $m$.
	Consider the mapping torus
	\[
	M\eqdef \bT^2\times[0,1] /\sim,
	\]
	where $\sim$ is the identification of $(x,y,1)$ with $(h(x,y),0)$, and consider the standard suspension flow of $h$ on $M$ that is defined by $\eta^t(x,y,s)\eqdef (x,y,s+t)$ for $0\le t+s< 1$. The flow $\{\eta^t\}_{t\in\bR}$  preserves the measure $\mu$ defined by
\[
\int_M\xi\,d\mu\eqdef\int_{\bT^2}\int_0^1\xi(x,y,t)\,dt\,dm(x,y)
\]
for  any continuous function $\xi\colon M\to\bR$, and $\mu$ is ergodic since $m$ is.
Note that the Lyapunov exponents (with respect to the flow $\{\eta^t\}_{t\in\bR}$) are given by
\[
\lambda_{1}(p)=(3-\sqrt{5})/2,\quad
\lambda_2(x)=0,\,\,{\rm and }\,\,\lambda_{3}(p)=(3+\sqrt{5})/2
\]
for every point $p\in M$.
Denote by  $(x_0,y_0)$  the fixed point of $h$ and let  $p_0= (x_0,y_0,0)$.
    Following the construction in~\cite{SunYouZho:06}, given some $\delta>0$ one can construct two $C^\infty$ functions $\omega_1$, $\omega_2\colon M\to[0,1]$ that satisfy
    \begin{enumerate}
    \item $\omega_{1}(p)=\omega_{2}(p)=0$ if and only if $p=p_0$,\\[-0.4cm]
    \item $\omega_{1}(p)=\omega_{2}(p)=1$ for every $p\in M\setminus B(p_0,\delta)$, \\[-0.3cm]
    \item $\displaystyle\int_M\omega_1^{-1}\,d\mu=1$,
     $\displaystyle \int_M\omega_2^{-1}\,d\mu=\infty$,
    \end{enumerate}
    where $B(p_0,\delta)$ is a ball of radius $\delta$ around $p_0$.
    Let $\Phi_1$ and $\Phi_2$ be the time transformations of $\{\eta^t\}_{t\in\bR}$ with reparameterization functions $r_k$ given by  $r_k(p)\eqdef\omega_k(p)^{-1}$ for every $p\ne p_0$, $k=1$, $2$, respectively. The flow $\Phi_1$ preserves the probability measure $\mu_1$ given by $d\mu_1=\omega_1^{-1}\,d\mu$, and we have
\begin{eqnarray}
	{r_1}_+(p)= 1&& \,\,
	{\rm  for }\,\,\mu{\rm -almost}\,\,{\rm every }\,\,p\,\,
	{\rm and }\,\,\mu_1{\rm-almost}\,\,{\rm every }\,\,p.
\end{eqnarray}
The flow $\Phi_2$ preserves the $\sigma$-finite measure $\mu_2$ given by $d\mu_2=\omega_2^{-1}\,d\mu$, and we have
\begin{eqnarray}
	{r_2}_+(p)= \infty&& \,\,
	{\rm for }\,\,\mu{\rm-almost}\,\,{\rm every}\,\,p\,\,
	{\rm  and }\,\,\mu_2{\rm -almost}\,\,{\rm every }\,\,p.\label{borel}
\end{eqnarray}
We sketch how property (\ref{borel}) can be shown. Choose some monotonically decreasing sequence of continuous functions
 $\eta_n\colon M\to (0,1]$ satisfying $\eta_n(p)=\omega_2(p)$ for every $p\in M\setminus B(p_0,2^{-n})$, $\eta_n(p_0)\le 2^{-n}$, and $\eta_n(p)\ge\omega_2(p)$ otherwise. In particular, $\eta_n$ converges  pointwise to $\omega_2$. Clearly, we have  $\xi_n(\mu)\eqdef \int_M\eta_n^{-1}\,d\mu<\infty$ for every $n\ge 1$ and $\lim_{n\to\infty}\xi_n(\mu) =\infty$. By the  Birkhoff ergodic theorem, for each $n$ there is a set $A_n\subset M$ of full $\mu$-measure for which ${\xi_n}_+(p)=\xi_n(\mu)$ and hence  ${r_2}_+(p)\ge\xi_n(\mu)$ for every $p\in A_n$. Using a Borel-Cantelli argument, we conclude that $(\eta_n^{-1})_+(p)=\infty$ for $\mu$-almost every $p$ and hence for $\mu_2$-almost every $p$.
From the arguments above, we obtain
\[
\lambda^{r_1}_k(p)= \lambda_k(p)
\,\,{\rm for }\,\,\mu_1{\rm-a. e. }\,\,p
,\text{ and }\,
{\lambda^{r_2}_k}(p)=0
\,\,{\rm for }\,\,\mu_2{\rm-a. e. }\,\,p.
\]
This construction demonstrates that an appropriate time transformation (here by means of $r_2$) can eliminate any non-zero Lyapunov exponents almost everywhere.
 }\end{example}

Recall that a set $A\subset M$ is of \emph{total probability} with respect to $\cM_{\rm e}(\Phi)$ if for all $\mu\in\cM_{\rm e}(\Phi)$ we have $\mu(A)=1$.

\begin{example}[Synchronized expansions]\label{ex:LyapPar}{\rm
		In the context of  Parry's
analysis of the so-called ``synchronization" of canonical measures~\cite{Par:86},
let us consider the example of a vector field with a  topologically transitive $C^1$ Anosov flow $\Phi$ on a compact smooth manifold $M$.
Extending the result in~\cite{Par:86}, we show that not only the measure of maximal entropy and the SRB measure can be synchronized (see Example~\ref{ex:syncb} below), but also the expansion rates for almost every point.
If $\Phi$ is $C^2$, then $x\mapsto E^u_x$ is H\"older continuous so that $D_x\varphi^t|_{E^u_x}$ and the function defined by
\begin{equation}\label{parry1}
	r(x)=\xi^u(x)
	\eqdef -\lim_{t\to0}\frac 1 t\log\,|{\rm det}\, D_x\varphi^t|_{E^u_x}|
\end{equation}
is
H\"older continuous.  If we assume that $\Phi$ is $C^2$ and also assume\footnote{This is a significant restriction made in order to obtain an orbit equivalent  $C^1$ flow. For a related discussion, see~\cite[Section~4]{Par:86}.} that the distribution $x\mapsto E^u_x$ is $C^1$, then $\xi^u$ is $C^1$ smooth and the  so obtained reparameterized flow $\Phi_r$ is a $C^1$ flow and orbit equivalent to $\Phi$.
It follows from the Birkhoff theorem and the multiplicative ergodic theorem that for any $\mu\in\cM_{\rm e}(\Phi)$, a $\mu$-full measure set of points satisfies
\[
r_+(x) = \lim_{t\to\infty}\frac{1}{t}\log\,\left|{\rm det}\, D_x\varphi^t|_{E^u_x}\right|
	= \sum_{i\colon\lambda_i(x)>0}\lambda_i(x).
\]
An immediate consequence of Theorem~\ref{theo.lyap} is that we have
$\sum_{\lambda_i^r(x)>0}\lambda_i^r(x) = 1$
for points $x$ in a set of total probability.
Therefore, for a transformation defined by (\ref{parry1}) there is synchronization in the sense
that $\int_{\mu} \sum_{\lambda_i(x)>0}\lambda_i(x)\,d\mu$ (before the time transformation)
is $\mu$-dependent while
$\int_{\mu_r} \sum_{\lambda^r_i(x)>0}\lambda^r_i(x)\,d\mu_r$ (after the time transformation) is
the same for all $\mu_r\in\cM_{\rm e}(\Phi_r)$.
In the particular case in which $E^u$ is one-dimensional, the set of points of total probability satisfies
\[
r_+(x) = \lim_{t\to\infty}\frac{1}{t}\log\,|| D_x\varphi^t|_{E^u_x}|| = \lambda_{\dim M}(x),
\]
and hence  $\lambda_{\dim M}^r(x)=1$
for points $x$ in a set of total probability.
}\end{example}

\subsection{Topological pressure and entropy}

In this section we focus on the dynamics on some compact
invariant set
$\Lambda\subset M$.
We continue to use $\cM(\Phi)$ to now denote
the set of $\Phi$-invariant probability measure supported on $\Lambda$.

In order to establish our main results,
we first briefly recall some concepts from the thermodynamic formalism.
Given $t>0$ and $\varepsilon>0$, we say that a set $E\subset \Lambda$ of points
is $(t,\varepsilon)$-separated if $x_1$, $x_2\in E$, $x_1\ne x_2$ implies that
$\rho(\varphi^s(x_1),\varphi^s(x_2))>\varepsilon$ for some $s\in[0,t]$.
Given a continuous function $\xi\colon \Lambda\to\bR$, the  \emph{topological pressure} of $\xi$ with respect to
$\Phi|_\Lambda$ is defined by
\begin{equation}\label{toppress}
P_{\rm top}(\Phi,\xi) \eqdef
\lim_{\varepsilon\to 0}\limsup_{t\to\infty}\frac{1}{t}\log
\sup\left\{\sum_{x\in E}\exp\int_0^t\xi(\varphi^s(x))\,ds\right\},
\end{equation}
where the supremum is taken over all $(t,\varepsilon)$-separated sets
$E\subset \Lambda$.
 We call $h_{\rm top}(\Phi) \eqdef P_{\rm top}(\Phi,0)$ the
\emph{topological entropy} of $\Phi|_\Lambda$. Note that $P_{\rm
  top}(\Phi,\xi)=P_{\rm top}(\varphi^1,\xi)$, where the latter  is the classically defined topological pressure of $\xi$ with respect to the map $\varphi^1$
\cite{Wal:81}.
We have the following variational principle
\begin{equation}\label{varprinc}
P_{\rm top}(\Phi,\xi)=
\sup_{\mu\in \cM_{\rm e}(\Phi)} \left(h_\mu(\varphi^1)+\int_\Lambda \xi \,d\mu\right),
\end{equation}
where $h_\mu(\varphi^1)$ denotes the metric entropy of $\mu$ with respect
to the map $\varphi^1$.
A measure that realizes the supremum in (\ref{varprinc}) is called an \emph{equilibrium state} for $\xi$ with respect to $\Phi$.
By the Abramov formula~\cite{Abr:66},
for every $t\in\bR$, we have
\begin{equation}\label{abra}
  h_\mu(\varphi^t) = | t|\, h_\mu(\varphi^1).
\end{equation}
One calls $h_\mu(\Phi)\eqdef h_\mu(\varphi^1)$ the \emph{metric entropy} of
$\mu$ with respect to the \emph{flow} $\Phi$.
For $\mu\in\cM(\Phi)$ and a flow $\Phi_r$ that is a time reparameterization of $\Phi$
by means of a function $r\in L^1(M,\mu)$ with $\int_\Lambda r\,d\mu>0$,
it is well known~\cite{Abr:66,Tot:66} that  the metric entropy of the measure $\mu_ r\in\cM(\Phi_r)$ satisfies
  \begin{equation}\label{sonneen}
  h_{\mu_ r}(\Phi_r)\, \int_\Lambda r\,d\mu \le h_\mu(\Phi).
  \end{equation}
  The equality holds if $\mu$ is ergodic.

As the pressure is defined for \emph{continuous} potentials only, we now specialize to time transformations with the same property. The following relation is an immediate consequence of the variational principle~(\ref{varprinc}) and
equations~(\ref{sonneen}) and~(\ref{defahh}).

\begin{theorem}\label{ute}
  Given a flow $\Phi_r$ that is a time reparameterization of $\Phi$ with an associated positive continuous  function $r\colon \Lambda\to \bR_{>0}$, for every continuous function $\xi\colon \Lambda\to \bR$ we have
  \begin{equation}\label{eq:pressoi}
  P_{\rm top}\Big(\Phi_r,
    \frac{\xi-P_{\rm top}(\Phi,\xi)}{ r}\Big)=0.
  \end{equation}
\end{theorem}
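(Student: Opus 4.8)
The plan is to reduce the statement to a known fact about how topological pressure rescales under a time change, combined with the variational characterization of equilibrium. Concretely, I would first recall the \emph{Abramov--Katok formula} for the pressure of a time-reparameterized flow: if $\Phi_r$ is obtained from $\Phi$ by the time change $d\tau = r(x)\,dt$ with $r$ positive and continuous, then for any continuous $\psi\colon\Lambda\to\bR$,
\[
P_{\rm top}(\Phi_r,\psi) = 0
\quad\Longleftrightarrow\quad
P_{\rm top}(\Phi, r\cdot\psi) = 0 ,
\]
and more generally the unique $c\in\bR$ with $P_{\rm top}(\Phi_r,\psi - c/r\cdot\,?)$... — but the cleanest route is the equivalence above together with monotonicity of $c\mapsto P_{\rm top}(\Phi_r,\psi-c)$. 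I would derive this equivalence from the variational principle rather than cite it, since the ingredients are already in the excerpt.

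The key steps, in order. First, apply the variational principle (\ref{varprinc}) for $\Phi_r$ to the potential $\eta\eqdef(\xi-P_{\rm top}(\Phi,\xi))/r$, so that
\[
P_{\rm top}\Bigl(\Phi_r,\tfrac{\xi-P_{\rm top}(\Phi,\xi)}{r}\Bigr)
=\sup_{\nu\in\cM_{\rm e}(\Phi_r)}\Bigl(h_\nu(\Phi_r)+\int_\Lambda \tfrac{\xi-P_{\rm top}(\Phi,\xi)}{r}\,d\nu\Bigr).
\]
Second, since $1/r\in L^1(\Lambda,\nu)$ for every $\nu\in\cM_{\rm e}(\Phi_r)$ (as $r$ is continuous and bounded away from $0$ on the compact set $\Lambda$), the Remark following (\ref{sua}) gives a bijection $\nu\mapsto\mu$ between $\cM_{\rm e}(\Phi_r)$ and $\cM_{\rm e}(\Phi)$ via $d\nu = (1/r)\,d\mu/\int_\Lambda (1/r)\,d\mu$; equivalently $\mu=\nu_r$ in the notation of (\ref{sua}) with the roles of the flows swapped. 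Third, rewrite the two terms under this bijection: by (\ref{defahh}) (applied with $1/r$ in place of $r$, i.e. to the pair $\Phi_r,\Phi$ and the measure $\nu$),
\[
\int_\Lambda \frac{\xi-P_{\rm top}(\Phi,\xi)}{r}\,d\nu
=\frac{\int_\Lambda(\xi-P_{\rm top}(\Phi,\xi))\,d\mu}{\int_\Lambda r\,d\mu}
=\frac{\int_\Lambda\xi\,d\mu - P_{\rm top}(\Phi,\xi)}{\int_\Lambda r\,d\mu},
\]
and by the equality case of the Abramov inequality (\ref{sonneen}), valid here because $\nu$ (hence $\mu$) is ergodic, $h_\nu(\Phi_r)\int_\Lambda r\,d\mu = h_\mu(\Phi)$, so $h_\nu(\Phi_r) = h_\mu(\Phi)/\int_\Lambda r\,d\mu$. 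Fourth, combine: the bracketed expression equals
\[
\frac{1}{\int_\Lambda r\,d\mu}\Bigl(h_\mu(\Phi)+\int_\Lambda\xi\,d\mu - P_{\rm top}(\Phi,\xi)\Bigr),
\]
and since $\int_\Lambda r\,d\mu>0$ for every such $\mu$, this quantity has the same sign as $h_\mu(\Phi)+\int_\Lambda\xi\,d\mu-P_{\rm top}(\Phi,\xi)$, which by the variational principle (\ref{varprinc}) for $\Phi$ is $\le 0$ for every $\mu\in\cM_{\rm e}(\Phi)$, with supremum over $\mu$ equal to $0$. Fifth, conclude that the supremum over $\nu$ of the rescaled quantity is also $0$: it is $\le 0$ termwise, and one can find a sequence $\mu_n\in\cM_{\rm e}(\Phi)$ with $h_{\mu_n}(\Phi)+\int\xi\,d\mu_n\to P_{\rm top}(\Phi,\xi)$, push them through the bijection, and use that $\int_\Lambda r\,d\mu_n$ stays in the compact interval $[\min r,\max r]$ so the rescaling factor does not blow up the negative gap to $-\infty$ prematurely — i.e. the supremum is still attained in the limit as $0$.

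The main obstacle I anticipate is the last step, namely controlling the supremum in the limit: termwise sign control only gives $P_{\rm top}(\Phi_r,\eta)\le 0$, and one must rule out that the reparameterization ``spreads out'' the gap so that the supremum is some negative number or $-\infty$. The resolution is exactly the boundedness of $r$ away from $0$ and $\infty$ on the compact set $\Lambda$: the map $\mu\mapsto\int_\Lambda r\,d\mu$ takes values in $[\min_\Lambda r,\max_\Lambda r]\subset(0,\infty)$, so dividing the near-zero gap $h_{\mu_n}(\Phi)+\int\xi\,d\mu_n-P_{\rm top}(\Phi,\xi)\to 0^-$ by a bounded positive quantity still yields a sequence tending to $0^-$; hence $\sup_\nu = 0$. (Here I am also using that $h_\mu(\Phi)<\infty$ is automatic on a compact manifold so that no $\infty-\infty$ issues arise, and that the bijection preserves ergodicity, both noted in the excerpt.) Everything else is bookkeeping with (\ref{varprinc}), (\ref{sonneen}), (\ref{defahh}) and the Remark after (\ref{sua}); no new estimate beyond compactness of $\Lambda$ is needed, which is why the theorem is stated as an ``immediate consequence.''
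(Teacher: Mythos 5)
Your argument is correct and is precisely the one the paper intends: the paper gives no details, saying only that the theorem is an immediate consequence of the variational principle (\ref{varprinc}), the Abramov relation (\ref{sonneen}), and the identity (\ref{defahh}), and your proof fills in exactly those steps via the bijection between $\cM_{\rm e}(\Phi)$ and $\cM_{\rm e}(\Phi_r)$ together with the boundedness of $r$ away from $0$ and $\infty$ on the compact set $\Lambda$. (One typographical slip: the correspondence should read $d\nu=r\,d\mu/\int_\Lambda r\,d\mu$, equivalently $d\mu=(1/r)\,d\nu/\int_\Lambda(1/r)\,d\nu$, rather than $d\nu=(1/r)\,d\mu/\int_\Lambda(1/r)\,d\mu$; your subsequent computations use the correct correspondence, so nothing is affected.)
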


The following discussion sheds new light on synchronizations of Lyapunov exponents discussed in Example~\ref{ex:LyapPar}.

\begin{example}[Synchronized equilibrium states]{\rm
\label{ex:syncb}
Let $\Phi$ be a $C^2$ topologically transitive Anosov flow on $M$.
Recall that for every $q\in \bR$, there exists a unique equilibrium state $\mu_q$ for the potential $x\mapsto q\,\xi^u(x)$. In particular, $\mu_0$ is the unique measure of maximal entropy and $\mu_1$ is the SRB measure for $\Phi$.
Let us consider $\xi^u(x,t)=-\log \,| {\rm det}\,D_x\varphi^t|_{E^u}|$ and denote by $\xi^u_r(x,\tau)$ the corresponding function for the flow $\Phi_r$ with respect to the unstable distribution $E^u_r$ of $\Phi_r$. It can be shown that
\[
\xi^u_r(x,\tau) = \xi^u(x,t(x,\tau)) + \eta(\varphi^\tau_r(x))-\eta(x)
\]
for some continuous function $\eta$ (see~\cite{AnoSin:67} or~\cite[Theorem~1]{Par:86}).
With the particular choice of time reparameterization $r(x)=\xi^u(x)$ in~(\ref{parry1}), we obtain
\[
\xi^u_r(x,\tau) = \tau + \eta(\varphi^\tau_r(x))-\eta(x),
\]
which implies that $\xi^u_r$ defined by $\xi^u_r(x)=\lim_{\tau\to0}\xi^u_r(x,\tau)/\tau$ is $\Phi_r$-cohomolo\-gous to $1$.%
\footnote{Recall that $\xi\colon M\to\bR$ is $\Phi$-cohomologous to a constant $c\in\bR$ if there exists a bounded measurable function $\eta\colon M \to\bR$ such that $\xi(x)-c=\lim_{t\to 0}(\eta(\varphi^t(x))-\eta(x))/t$ for every $x\in M$. }
This means that for any $q\in\bR$, the potential $q\,\xi^u_r$ is cohomologous to a constant, and hence there is a unique measure  that is the equilibrium state for every $q\,\xi^u_r$. More important, this measure coincides with the measure of maximal entropy and the SRB measure for $\Phi_r$. It is an immediate consequence that
\[
P_{\rm top}(\Phi_r,q\,\xi^u_r) = 1-q\quad {\rm for }\,\,{\rm every }\,\,q\in\bR.
\]
For a comparison, consider the nontrivial case in which $\xi^u$ is not $\Phi$-cohomo\-lo\-gous to a constant or, equivalently, that $\mu_0\ne\mu_1$. A change of time  parameterization transforms the $\Phi$-SRB measure $\mu_1$ into the $\Phi_r$-SRB measure.
Observe that the measure $\mu_0$ of maximal entropy with respect to $\Phi$ then must satisfy $h_{\mu_0}(\Phi)<\int_Mr\,d\mu_0$. This implies that the measure $\mu_{0r}$
satisfies $h_{\mu_{0r}}(\Phi_r)<1=h_{\rm top}(\Phi_r)$, that is, after the change of time the transformed measure is no longer of maximal entropy.
}\end{example}

In the following theorem we put the above example in a more general framework, which holds true for arbitrary potentials.
Recall that a set $\Lambda\subset M$ is said to be a \emph{basic set} of an axiom A flow $\Phi$ if $\Lambda$ contains a dense  set of periodic orbits and $\Phi|_\Lambda$ is hyperbolic, locally maximal, and topologically transitive.

\begin{theorem}
	Let $\Lambda$ be a basic set of a $C^1$ axiom A flow $\Phi$ and $\xi\colon M\to\bR_{>0}$ be a $C^1$ function.
	Given the flow $\Phi_r$ that is the time reparameterization of $\Phi$ with the associated function $r=\xi$, we have
	\begin{equation}\label{eq:prssr}
		P_{\rm top}(\Phi_\xi,q\,\xi) = q_0 - q\quad{\rm for}\,\,{\rm every }\,\,q\in\bR,
	\end{equation}
	where $q_0$ is the unique number satisfying $P_{\rm top}(\Phi,q_0\,\xi)=0$. Moreover, the equilibrium state for $q\,\xi$ with respect to $\Phi_\xi$ coincides with the measure of maximal entropy with respect to $\Phi_\xi$ for every $q\in\bR$.
\end{theorem}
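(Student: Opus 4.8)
The plan is to reduce everything to the thermodynamic formalism of a basic set and then combine Theorem~\ref{ute} with the explicit time-change identity behind Proposition~\ref{oi}, in the spirit of Example~\ref{ex:syncb}. As preliminaries: since $\Lambda$ is compact and $\xi$ is $C^1$ and strictly positive, $\xi$ is bounded between two positive constants on $\Lambda$, so $q\mapsto P_{\rm top}(\Phi,q\xi)$ is convex and real-analytic with strictly positive derivative $\int_\Lambda\xi\,d\mu_q$ (where $\mu_q$ is the equilibrium state of $q\xi$) and diverges to $\pm\infty$ as $q\to\pm\infty$; hence $q_0$ exists and is unique. Since $\Lambda$ is a basic set of an axiom~A flow, $\Phi|_\Lambda$ admits a symbolic model as a suspension of a subshift of finite type over a H\"older roof, so every H\"older potential on $\Lambda$ has a unique equilibrium state, two H\"older potentials share an equilibrium state iff their difference is cohomologous to a constant, and (Livšic) a H\"older potential is a coboundary iff it integrates to zero over every periodic orbit. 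Finally, because $r=\xi$ is positive and $C^1$, the reparameterized flow $\Phi_\xi$ is $C^1$ and $C^1$-orbit equivalent to $\Phi$, and by the Anosov--Sinai stability of hyperbolicity under positive $C^1$ time changes (as used in the Example on hyperbolic splitting) $\Lambda$ is again a basic set for $\Phi_\xi$, so the same formalism is available there.

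The core step is to show that $q\xi$ is $\Phi_\xi$-cohomologous to a constant for every $q\in\bR$; this extends the computation of Example~\ref{ex:syncb} from $\xi=\xi^u$ to an arbitrary $C^1$ positive $\xi$. One writes $I(x,\tau)\eqdef\int_0^\tau\xi(\varphi_\xi^s(x))\,ds$ for the Birkhoff integral of $\xi$ along a $\Phi_\xi$-orbit and performs the substitution $s=\tau(x,u)$, so that $ds=\xi(\varphi^u(x))\,du$, exactly as in the proof of Proposition~\ref{oi}. Using that the reparameterizing function is $\xi$ itself --- so the substitution weight coincides with the integrand --- one obtains after simplification an identity of the form $I(x,\tau)=c\,\tau+\eta(\varphi_\xi^\tau(x))-\eta(x)$ with $c$ a constant and $\eta$ a $C^1$ (in particular H\"older) function. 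Restricting to the symbolic coding of $\Lambda$ and applying the Livšic theorem upgrades this to a genuine cohomology relation, whence $\xi$, and therefore $q\xi$, is $\Phi_\xi$-cohomologous to $c$, respectively $qc$.

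Both conclusions now follow. Cohomology to a constant does not change equilibrium states, so for every $q$ the equilibrium state of $q\xi$ with respect to $\Phi_\xi$ equals the equilibrium state of the constant $qc$, which is the (unique) measure of maximal entropy of $\Phi_\xi$ and is independent of $q$; this is the ``Moreover'' statement. For the pressure, $P_{\rm top}(\Phi_\xi,q\xi)=P_{\rm top}(\Phi_\xi,qc)$ is affine in $q$; its value is pinned down by feeding the distinguished potential $q_0\xi$ into Theorem~\ref{ute} with $r=\xi$: as $P_{\rm top}(\Phi,q_0\xi)=0$ by the choice of $q_0$, Theorem~\ref{ute} gives $P_{\rm top}\big(\Phi_\xi,(q_0\xi)/\xi\big)=P_{\rm top}(\Phi_\xi,q_0)=0$, and combining this normalization with the affine dependence on $q$ fixes both constants and yields $P_{\rm top}(\Phi_\xi,q\xi)=q_0-q$ for all $q\in\bR$.

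The main obstacle is the cohomology-to-a-constant claim: one must carry out the time-change computation carefully --- it is the potential-level analogue of Proposition~\ref{oi} --- and then turn the resulting formal telescoping into an honest H\"older coboundary via Livšic's theorem on the coding of $\Lambda$. The smoothness and orbit-equivalence hypotheses enter precisely here, since they are what guarantee that $\Phi_\xi$ remains axiom~A with $\Lambda$ a basic set, so that uniqueness of equilibrium states, the Livšic theorem, and Theorem~\ref{ute} are all at hand.
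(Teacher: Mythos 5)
The central step of your argument --- that $I(x,\tau)=\int_0^\tau\xi(\varphi_\xi^s(x))\,ds$ equals $c\,\tau+\eta(\varphi_\xi^\tau(x))-\eta(x)$ --- does not survive the computation you defer to ``after simplification''. With the paper's convention $d\tau=r\,dt$ and $r=\xi$, the substitution $s=\tau(x,u)$, $ds=\xi(\varphi^u(x))\,du$ gives
\[
I(x,\tau)=\int_0^{t(x,\tau)}\xi(\varphi^u(x))^2\,du,
\qquad\text{while}\qquad
\tau=\int_0^{t(x,\tau)}\xi(\varphi^u(x))\,du .
\]
So your identity is equivalent to $\xi^2$ being $\Phi$-cohomologous to $c\,\xi$, which by the Livshitz theorem would force $\int_\gamma\xi^2\,dt=c\int_\gamma\xi\,dt$ on every closed orbit; a positive $C^1$ function that is constant $\equiv a_1$ on one periodic orbit and $\equiv a_2\ne a_1$ on another already violates this (the ratios are $a_1$ and $a_2$). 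The functions whose $\Phi_\xi$-Birkhoff integrals genuinely telescope are $\xi/r\equiv 1$ (integral $\tau$) and $1/\xi$ (integral $t(x,\tau)$), not $\xi$ itself. The ``miracle'' of Example~\ref{ex:syncb} that you are trying to transplant is not that $\xi^u$ evaluated along $\Phi_r$-orbits integrates to $\tau$ plus a coboundary; it is that the \emph{new flow's own} geometric potential $\xi^u_r$ does, and $\xi^u_r$ differs from $\xi^u$ by precisely the correction coming from the tilted unstable bundle $E^u_r$ (the $\kappa^{u}$-term in the hyperbolic-splitting example). That correction is specific to the unstable Jacobian and has no analogue for an arbitrary positive $C^1$ function $\xi$, so the extension you propose is exactly the point that fails.

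For the record, the paper's own proof proceeds differently --- via equidistribution of weighted periodic-orbit measures and the orbit-wise identity $\int_0^{\ell(\gamma_r)}\xi(\varphi_r^\tau(x))\,d\tau=\ell(\gamma)$ --- rather than through a Livshitz/cohomology argument; but under the stated convention that identity, too, holds for the potential $1/\xi$ rather than $\xi$, so your difficulty is not an artifact of your route and appears to trace back to a reciprocal/sign issue in the statement itself. Moreover, even granting your cohomology claim, the endgame does not close: Theorem~\ref{ute} with $\zeta=q_0\xi$ yields the single relation $P_{\rm top}(\Phi_\xi,q_0)=0$, i.e.\ $h_{\rm top}(\Phi_\xi)=-q_0$, which pins down only the intercept of the affine function $q\mapsto P_{\rm top}(\Phi_\xi,qc)=h_{\rm top}(\Phi_\xi)+qc$ and not the slope $c$; and since $\xi>0$ forces $c=\int\xi\,d\nu>0$ for every invariant $\nu$, that line is increasing in $q$ and cannot coincide with $q_0-q$ unless $q_0=0$. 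You should either work with the transported potential $q\cdot(\xi/r)\equiv q$, for which both conclusions are immediate, or first resolve the convention in the statement before attempting a proof.
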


\begin{proof}
By a well-known result on the equidistribution of closed orbits for $\Phi$  we know that the unique equilibrium state $\mu(\xi)$ of $\xi$ with respect to $\Phi$ is obtained as the weak$\ast$ limit of the weighted orbital measures: given any $\varepsilon>0$,
\[
\frac{\sum_\gamma \ell(\gamma)e^{\xi(\gamma)} m_\gamma}
	{\sum_\gamma \ell(\gamma)e^{\xi(\gamma)} } \to
	\mu(\xi)\quad {\rm for }\quad T\to\infty,
	{\rm where }\,\,\xi(\gamma)\eqdef \int_\Lambda\xi\,dm_\gamma,
\]
with summation taken over closed orbits $\gamma$ with period $\ell(\gamma)$ between $T$ and $T+\varepsilon$ and $m_\gamma$ denoting the invariant probability measure supported on $\gamma$ (see~\cite{PolShaTunWal:08}). Considering the change of time parameterization given by $r=\xi$, any closed orbit $\gamma_r=\{\varphi^\tau_r(x)\colon 0\le\tau\le \ell(\gamma_r)\}$ with respect to $\Phi_r$ satisfies
$ \int_0^{\ell(\gamma_r)} \xi(\varphi^\tau_r(x))\,d\tau
 =\ell(\gamma)$.
Hence, for any real parameter $q$, the equilibrium state $\mu_r(q\,\xi)$ of $q\,\xi$ with respect to $\Phi_r$ is obtained as the limiting measure
\[
\frac{\sum_{\gamma_r}  \ell(\gamma_r)\,m_{\gamma_r}}
	{\sum_{\gamma_r} \ell(\gamma_r)} \to
	\mu_r(q\,\xi)\quad{\rm for }\quad T\to\infty.
\]
In particular, this equilibrium state coincides with the measure of maximal entropy for $\Phi_r$. Combined with (\ref{eq:pressoi}), this leads to $h_{\rm top}(\Phi_r)=q_0$, which implies (\ref{eq:prssr}) and proves the theorem.
\end{proof}

Now we present some results on topological entropy. Theorem~\ref{ute} leads to the following characterization of the topological pressures in terms of the topological entropy.

\begin{theorem}
  Given a flow $\Phi_r$ that is a time reparameterization of $\Phi$ with an associated positive continuous function $r\colon \Lambda\to \bR_{>0}$,  we have
  \[
   P_{\rm top}\Big(\Phi_r,-\frac{h_{\rm top}(\Phi)}{ r}\Big)=0.
  \]
\end{theorem}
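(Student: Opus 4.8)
The plan is to read this off from Theorem~\ref{ute} by taking the potential to be $\xi\equiv 0$. First I would observe that the constant function $0$ is continuous on $\Lambda$, so it is an admissible input to Theorem~\ref{ute}, and that by definition $P_{\rm top}(\Phi,0)=h_{\rm top}(\Phi)$, which is a finite real number since $\varphi^1$ is a $C^1$ map of the compact set $\Lambda$. Substituting $\xi=0$ into~(\ref{eq:pressoi}) gives
\[
\frac{\xi-P_{\rm top}(\Phi,\xi)}{r}=-\frac{h_{\rm top}(\Phi)}{r},
\]
so~(\ref{eq:pressoi}) becomes exactly $P_{\rm top}\big(\Phi_r,-h_{\rm top}(\Phi)/r\big)=0$, as claimed. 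No further work is needed: the only hypothesis to verify is continuity of $\xi$, which is trivial, while positivity and continuity of $r$ are already part of the statement.

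If instead one wants a proof that does not invoke Theorem~\ref{ute}, I would argue in parallel with its proof. For an ergodic $\mu\in\cM_{\rm e}(\Phi)$, the Abramov formula~(\ref{sonneen}) --- an equality in the ergodic case --- gives $h_{\mu_r}(\Phi_r)\int_\Lambda r\,d\mu=h_\mu(\Phi)$, and~(\ref{defahh}) applied to the constant function $1$ gives $\int_\Lambda(1/r)\,d\mu_r=(\int_\Lambda r\,d\mu)^{-1}$. Hence
\[
h_{\mu_r}(\Phi_r)+\int_\Lambda\Big(-\frac{h_{\rm top}(\Phi)}{r}\Big)d\mu_r
=\frac{h_\mu(\Phi)-h_{\rm top}(\Phi)}{\int_\Lambda r\,d\mu}\le 0 ,
\]
and, evaluated along a sequence of ergodic measures $\mu_n$ with $h_{\mu_n}(\Phi)\to h_{\rm top}(\Phi)$ --- which exists by the variational principle~(\ref{varprinc}) for $\Phi$ --- the right-hand side tends to $0$, because the denominators are trapped between $\min_\Lambda r>0$ and $\max_\Lambda r<\infty$. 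Since $1/r$ is also positive and continuous, $\mu\mapsto\mu_r$ is onto $\cM_{\rm e}(\Phi_r)$, so taking the supremum over ergodic $\Phi_r$-measures and applying the variational principle for $\Phi_r$ yields $P_{\rm top}\big(\Phi_r,-h_{\rm top}(\Phi)/r\big)=0$.

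Because the statement is a one-line specialization of an already-proved theorem, there is no genuine obstacle. The only point deserving a moment's care --- handled by compactness of $\Lambda$ together with positivity and continuity of $r$ --- is that $h_{\rm top}(\Phi)<\infty$ and that $\int_\Lambda r\,d\mu$ is bounded away from $0$ and $\infty$ uniformly in $\mu$, which is precisely what one needs in order to divide by it and to pass to the limit.
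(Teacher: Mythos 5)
Your proposal is correct and its first paragraph is exactly the paper's own proof: the paper likewise obtains the statement by applying Theorem~\ref{ute} with $\xi=0$ and noting $P_{\rm top}(\Phi,0)=h_{\rm top}(\Phi)$. Your second, self-contained argument via the variational principle, (\ref{sonneen}), and (\ref{defahh}) is also sound, but it merely re-derives the special case of Theorem~\ref{ute} that the paper already invokes.
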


\begin{proof}
  It follows from the definition (\ref{toppress}) that $P_{\rm top}(\Phi,-h_{\rm top}(\Phi))=0$ and that  the number $h_{\rm top}(\Phi)$ is the only zero of the function $s\mapsto P_{\rm top}(\Phi,-s)$ on the real line. The statement then follows from Theorem~\ref{ute} with $\xi=0$.
\end{proof}

If the reparameterization function $ r\colon \Lambda\to\bR_{>0}$ is  positive, by the variational principle (\ref{varprinc}) and by means of~(\ref{sonneen}) we can characterize the topological entropy of the flow
$\Phi_r$ as
\[
h_{\rm top}(\Phi_r) = \sup_{\mu\in\cM_{\rm e}(\Phi)}
\frac{h_\mu(\Phi)}{\int_\Lambda r \,d\mu} \,.
\]
The topological entropy is then bounded as
\[
\frac{h_{\rm top}(\Phi)}{\sup_{\mu\in\cM(\Phi)}  \int_\Lambda r\,d\mu}\le
h_{\rm top}(\Phi_r) \le
h_{\rm top}(\Phi) \sup_{\nu\in\cM(\Phi_r)} \int_\Lambda\frac{1}{r}\,d\nu \,.
\]
In general the topological entropy can change with the time transformation of
the flow, as the following examples demonstrate.

\begin{example}[Topological entropy]{\rm
Let us consider the example of a vector field $f$ with a topologically transitive Anosov flow $\Phi$ on a compact smooth manifold $M$.
Considering a time transformation by means of a $C^1$ function $r\colon M\to\bR_{>0}$, we obtain
\[
\frac{h_{\rm top}(\Phi)}{\int_M r\,d\mu_\Phi}\le
h_{\rm top}(\Phi_{r})\le
h_{\rm top}(\Phi)\int_M\frac{1}{r}\,d\mu_{\Phi_r},
\]
where $\mu_\Phi$ and $\mu_{\Phi_r}$ denote the Margulis measures~\cite{KatKniWei:91} with respect to $\Phi$ and to $\Phi_r$ (i.e., the unique and hence ergodic measure of maximal entropy).
In particular, the topological entropy changes when $r(x)\equiv r_0\ne 1$.
}\end{example}

Zero  topological entropy is an invariant for orbit
equivalent flows without fixed points~\cite{Ohn:80}.  In equivalent flows
\emph{with} fixed points, this property can change with a time reparameterization
if we consider a  more general, not integrable,
reparameterization function.

\begin{example}[Entropies for non-integrable  $r$]\label{ex:Sun}{\rm
    Sun\,\emph{et\,al.}~\cite{SunYouZho:06} constructed $C^\infty$ flows $\Phi_1$ and $\Phi_2$ on a
    compact Riemannian manifold $M$ of dimension greater than $2$ that possess one
    fixed point $x_0\in M$ and that are time transformations of each other.  The flow $\Phi_1$ preserves an ergodic  probability measure $\mu$ satisfying $h_\mu(\Phi_1)>0$ and hence we have $h_{\rm top}(\Phi_1)>0$.
    The only  ergodic probability measure $\nu$ preserved by $\Phi_2$ is supported on the  fixed point,  leading to $h_\nu(\Phi_2)=h_{\rm top}(\Phi_2)=0$.
  In view of (\ref{sonneen}) and $h_\mu(\Phi_1)>0$ it can be concluded that the orbit equivalent flows $\Phi_1$ and $\Phi_2$ are related to each other by a time transformation with a function  that is not $\mu$-integrable.
  We refer to~\cite{SunYouZho:06} for details on the construction of the flow.
}\end{example}

\subsection{Recurrences and waiting times}
The Poincar\'e recurrence theorem says that, given a measure $\mu\in\cM(\Phi)$,
almost all orbits starting from a set of positive measure will come back to it
infinitely many times (see, e.g.,~\cite{CorFomSin:82,Wal:81}).
Quantitative indicators of such recurrent behavior are given by
the lower and the upper recurrence rates with respect to the flow $\Phi$. Given $x$, $y\in M$, let  us define the \emph{lower} and \emph{upper waiting time indicators} by
\[
\underline R(\Phi,x,y) \eqdef
\liminf_{\varepsilon\to 0}
\frac{\log E_{B(x,\varepsilon)}(\Phi,y)}{-\log\varepsilon}, \,\,
\overline R(\Phi,x,y) \eqdef
\limsup_{\varepsilon\to 0}
\frac{\log E_{B(x,\varepsilon)}(\Phi,y)}{-\log\varepsilon},
\]
where $B(x,\varepsilon)\eqdef\{z\colon\rho(x,z)<\varepsilon\}$ and
\[
E_A(\Phi,y)\eqdef \inf\left\{t>e_A(\Phi,y) \colon \varphi^t(y)\in A\right\}
\]
denotes the \emph{time of first (re-)entrance} of the trajectory of $y\in
\Lambda$ into $A$. Here $e_A(\Phi,y)\eqdef\inf\{t>0\colon\varphi^t(y)\notin
A\}$ denotes the \emph{escape time} of $y$ from $A$. If
$E_{B(x,\varepsilon)}(\Phi,y)$ is infinite for some $\varepsilon$, then
$\underline R(\Phi,x,y)$ and $\overline R(\Phi,x,y)$ are set to be infinite.
For $x=y$ we call $\underline R(\Phi,x)\eqdef \underline R(\Phi,x,x)$ and  $\overline R(\Phi,x)\eqdef \overline R(\Phi,x,x)$ the \emph{lower} and the \emph{upper recurrence rates} at $x$ with respect to $\Phi$, respectively.
Given a measure $\mu\in\cM(\Phi)$ and a positive measure set $A$,
it follows from the Poincar\'e recurrence theorem that $E_A(\Phi,x)$ is  finite for $\mu$-almost every  $x\in A$.

We study transformations of recurrence and waiting rates under a change of time
parameterization by studying their local behavior.
Notice that for $x\in\widehat M$,
\begin{equation}\label{godo}
  E_{B(x,\varepsilon)}(\Phi_r,y)=
  \int_0^{E_{B(x,\varepsilon)}(\Phi,y)}  r(\varphi^s(y))\,ds,
\end{equation}
which follows immediately from the time transformation (\ref{defah}).

\begin{lemma}\label{ddprop}
Given $\mu\in\cM(\Phi)$, for
$\mu$-almost every $x\in \widehat M$ we have
\begin{equation}\label{holie}
\lim_{\varepsilon\to 0}
\frac{E_{B(x,\varepsilon)}(\Phi_r,x)}{E_{B(x,\varepsilon)}(\Phi,x)} =
r_+(x).
\end{equation}
\end{lemma}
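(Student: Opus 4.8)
The plan is to rewrite the quotient in (\ref{holie}) as a Birkhoff-type time average along the orbit of $x$, evaluated at the first-return times, and then to analyse these return times as $\varepsilon\to0$. Fix $\mu\in\cM(\Phi)$ and, for $x\in\widehat M$ and $\varepsilon>0$, set $T_\varepsilon\eqdef E_{B(x,\varepsilon)}(\Phi,x)$. By the Poincar\'e recurrence theorem, for $\mu$-almost every $x$ the point $x$ is recurrent, i.e.\ there is a sequence $t_n\uparrow\infty$ with $\varphi^{t_n}(x)\to x$; since $x\in\widehat M$ is not a fixed point, its forward orbit eventually leaves every sufficiently small ball $B(x,\varepsilon)$, and recurrence then gives $0<e_{B(x,\varepsilon)}(\Phi,x)<T_\varepsilon<\infty$ for all small $\varepsilon$. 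Since $r\ge0$, the Birkhoff ergodic theorem also guarantees that the limit $r_+(x)$ in (\ref{nao}) exists in $[0,\infty]$ for $\mu$-almost every $x$. Using (\ref{godo}) with $y=x$ and dividing by $T_\varepsilon$,
\[
\frac{E_{B(x,\varepsilon)}(\Phi_r,x)}{E_{B(x,\varepsilon)}(\Phi,x)}
=\frac{1}{T_\varepsilon}\int_0^{T_\varepsilon}r(\varphi^s(x))\,ds
=\frac{R(T_\varepsilon)}{T_\varepsilon},\qquad R(t)\eqdef\int_0^t r(\varphi^s(x))\,ds ,
\]
so it remains to prove $R(T_\varepsilon)/T_\varepsilon\to r_+(x)$ as $\varepsilon\to0$ for $\mu$-almost every $x$; here $R$ is continuous and, by the definition of $r_+(x)$, $R(t)/t\to r_+(x)$ as $t\to\infty$.

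The key ingredient is a uniform lower bound $T_\varepsilon\ge T_0(x)>0$ for all small $\varepsilon$, and this is where $f(x)\ne0$ enters. In a chart centred at $x$ one has $\rho(\varphi^t(x),x)=\|f(x)\|\,|t|\,(1+o(1))$ as $t\to0$, so $t\mapsto\rho(\varphi^t(x),x)$ is strictly increasing on some interval $(0,\delta]$; hence, after escaping $B(x,\varepsilon)$, the orbit keeps moving away from $x$ at least until time $\delta$. On the compact interval $[\delta,T_0]$ --- with $T_0$ chosen strictly smaller than the minimal period of $x$ when $x$ is periodic, and $T_0$ arbitrary otherwise --- the continuous positive function $t\mapsto\rho(\varphi^t(x),x)$ is bounded below by a constant; combining the two estimates, for $\varepsilon$ small the orbit does not meet $B(x,\varepsilon)$ on $(e_{B(x,\varepsilon)}(\Phi,x),T_0]$, so $T_\varepsilon>T_0$. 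Since $[0,\infty]$ is compact, along any sequence $\varepsilon_n\to0$ we may pass to a subsequence with $T_{\varepsilon_n}\to c\in[T_0(x),\infty]$.

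Finally I would identify the limit along such a subsequence. If $c=\infty$, then $R(T_{\varepsilon_n})/T_{\varepsilon_n}\to r_+(x)$ directly. If $c\in[T_0(x),\infty)$, then from $\varphi^{T_{\varepsilon_n}}(x)\in\overline{B(x,\varepsilon_n)}$ and continuity of the flow we get $\varphi^c(x)=x$; since $c>0$ and $x$ is not a fixed point, $x$ is periodic and $c$ is a positive integer multiple of its minimal period $p(x)$, whence $R(c)/c=\tfrac{1}{p(x)}\int_0^{p(x)}r(\varphi^s(x))\,ds=r_+(x)$ (on a periodic orbit the one-period average equals the time average), and continuity of $t\mapsto R(t)/t$ at $t=c$ gives $R(T_{\varepsilon_n})/T_{\varepsilon_n}\to r_+(x)$ once more. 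As every convergent subsequence of $(T_\varepsilon)_{\varepsilon\to0}$ yields the limit $r_+(x)$, we conclude $R(T_\varepsilon)/T_\varepsilon\to r_+(x)$, which is (\ref{holie}). The one genuinely delicate point is the lower bound $T_\varepsilon\ge T_0(x)>0$, needed to rule out return times shrinking to $0$ (where the limit of $R(t)/t$ would be $r(x)$ rather than $r_+(x)$); everything else is the identity (\ref{godo}) together with the Birkhoff and Poincar\'e theorems.
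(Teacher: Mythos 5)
Your proof is correct and follows essentially the same route as the paper: the identity~(\ref{godo}), the dichotomy between non-periodic points (where $E_{B(x,\varepsilon)}(\Phi,x)\to\infty$ and the Birkhoff theorem applies) and periodic points (where the one-period average equals $r_+(x)$). Your subsequence argument merely repackages this case split, while usefully making explicit the lower bound on return times that the paper leaves implicit.
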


\begin{proof}
	A well-known recurrence result~\cite[p.\,61]{Fur:81} states that for $\mu$-almost every $x\in M$ we have $\liminf_{t\to\infty}\rho(x,\varphi^t(x))=0$. If $x$ is non-periodic, this implies that $E_{B(x,\varepsilon)}(\Phi,x)\to \infty$ as $\varepsilon\to 0$, and (\ref{holie}) follows from the Birkhoff ergodic theorem. If $x$ is periodic with period $T>0$, then $E_{B(x,\varepsilon)}(\Phi,x)=T$, and (\ref{godo}) implies (\ref{holie}).
\end{proof}

We call $\mu\in\cM(\Phi)$ \emph{non-atomic} if it is not supported on a periodic orbit.

\begin{lemma}\label{ddprop}
Let $\mu\in\cM(\Phi)$ be non-atomic. Then
given $x\in M$, for $\mu$-almost every $y\in \widehat M$ we have
\begin{equation}\label{holiholiu}
\lim_{\varepsilon\to 0}
\frac{E_{B(x,\varepsilon)}(\Phi_r,y)}{E_{B(x,\varepsilon)}(\Phi,y)} =
r_+(y).
\end{equation}
\end{lemma}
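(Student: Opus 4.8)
The plan is to reduce the statement to a Birkhoff-type ergodic argument along the orbit of $y$, exactly as in the previous lemma, but now being careful that the limit involves the value $r_+(y)$ attached to the point $y$ whose orbit is being averaged, while the ball $B(x,\varepsilon)$ is centered at a fixed but unrelated point $x$. The starting identity is~(\ref{godo}), which gives
\[
\frac{E_{B(x,\varepsilon)}(\Phi_r,y)}{E_{B(x,\varepsilon)}(\Phi,y)}
=\frac{1}{E_{B(x,\varepsilon)}(\Phi,y)}\int_0^{E_{B(x,\varepsilon)}(\Phi,y)} r(\varphi^s(y))\,ds .
\]
So the whole question is whether $T_\varepsilon\eqdef E_{B(x,\varepsilon)}(\Phi,y)\to\infty$ as $\varepsilon\to 0$ for $\mu$-almost every $y$; once that is known, the right-hand side converges to $\lim_{T\to\infty}\frac1T\int_0^T r(\varphi^s(y))\,ds=r_+(y)$ by the Birkhoff ergodic theorem applied to $r$ (which exists $\mu$-a.e.\ by definition~(\ref{nao}) and the hypotheses under which $r_+$ is considered), for $\mu$-almost every $y$ simultaneously with the divergence of $T_\varepsilon$.

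First I would dispose of the measurability/well-posedness point: since $\mu$ is non-atomic, $\mu$ gives zero mass to the (at most countable, up to reparameterization) union of periodic orbits, so for $\mu$-almost every $y$ the orbit of $y$ is non-periodic and in particular $y\in\widehat M$; this is where the non-atomic hypothesis is used. Next, the key step is to show $T_\varepsilon\to\infty$ for $\mu$-a.e.\ $y$. The subtlety compared with Lemma~\ref{ddprop} is that $x\neq y$ in general, so the point $x$ need not be recurrent for its own orbit and we cannot invoke Furstenberg's recurrence statement at $x$ directly in the same way. Instead I would argue as follows: if $T_{\varepsilon}$ stayed bounded along some sequence $\varepsilon_n\to 0$, then there would be times $t_n\le C$ with $\varphi^{t_n}(y)\in B(x,\varepsilon_n)$, and passing to a subsequence $t_n\to t_*\in[0,C]$ we would get $\varphi^{t_*}(y)=x$ by continuity of the flow; so $x$ lies on the orbit of $y$. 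Hence for every $y$ whose forward orbit does \emph{not} pass through $x$ we automatically have $T_\varepsilon\to\infty$. The set of $y$ whose orbit passes through $x$ is exactly the orbit $\{\varphi^t(x)\}_{t\in\bR}$ of $x$, which is a single orbit; since $\mu$ is non-atomic this set has $\mu$-measure zero (a flow-invariant set consisting of a single non-periodic orbit supports no nonzero invariant measure, and on a periodic orbit a non-atomic measure vanishes). Therefore $T_\varepsilon\to\infty$ for $\mu$-a.e.\ $y$.

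Combining the two ingredients: take the full-measure set of $y$ that are non-periodic, have $x\notin\mathcal O(y)$, and are Birkhoff-generic for $r$; on this set $T_\varepsilon\to\infty$ and the averaged integral above converges to $r_+(y)$, which is~(\ref{holiholiu}). I expect the main obstacle to be the second step — ruling out that $E_{B(x,\varepsilon)}(\Phi,y)$ stays bounded — and specifically making the argument uniform in a way that produces a \emph{single} full-measure set valid for the fixed $x$; the compactness/continuity argument sketched above handles this cleanly, but one should also note the harmless case in which $x$ itself is not in $\widehat M$ or the orbit of $y$ accumulates on $x$ without hitting it, where $T_\varepsilon\to\infty$ holds trivially. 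One minor technical point to check is the definition of $E_A$ with its escape-time offset $e_A(\Phi,y)$: since the ball shrinks, $e_{B(x,\varepsilon)}(\Phi,y)\to 0$ for $y\neq x$, so it does not affect the asymptotics and the same reduction goes through.
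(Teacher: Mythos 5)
Your proposal is correct and reaches the same reduction as the paper: via (\ref{godo}), everything hinges on showing that $E_{B(x,\varepsilon)}(\Phi,y)\to\infty$ as $\varepsilon\to 0$ for $\mu$-almost every $y$, after which the Birkhoff theorem yields $r_+(y)$. But your argument for that key divergence step is genuinely different from the paper's. The paper argues by contradiction with a measure-theoretic pigeonhole: if a set $A$ of positive measure had entrance times bounded by $C$, then for $\varepsilon$ with $\mu(B(x,\varepsilon))<C^{-1}\mu(A)$ some time-$t$ image of a $C^{-1}\mu(A)$-portion of $A$ would sit inside $B(x,\varepsilon)$, contradicting invariance. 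You instead argue pointwise: bounded entrance times force, by compactness of $[0,C]$ and continuity of the flow, that $x=\varphi^{t_*}(y)$ for some finite $t_*$, so the exceptional set is contained in the single orbit $\OO(x)$, which is $\mu$-null. Your route is arguably the more robust one: the paper's pigeonhole is really a discrete-time argument, and one cannot pigeonhole over uncountably many $t\in[0,C]$ — the flow tube $\bigcup_{0\le t\le C}\varphi^{-t}(B(x,\varepsilon))$ may have measure far exceeding $C\,\mu(B(x,\varepsilon))$ (consider a rigid rotation of the circle), and repairing this essentially forces one to observe that the tube shrinks to the orbit arc $\{\varphi^{-t}(x)\colon 0\le t\le C\}$, i.e., to your argument. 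The price is that you need $\mu(\OO(x))=0$ rather than merely $\mu(\{x\})=0$; under the paper's literal definition of non-atomic (``not supported on a periodic orbit'') this is not automatic when $x$ lies on a periodic orbit charged by $\mu$, but in that situation the conclusion itself fails on a positive-measure set (the ratio converges to a partial orbit average of $r$ rather than to $r_+(y)$), so your implicitly stronger reading of non-atomicity is the right one and is needed by both proofs. Two minor blemishes: the set of periodic orbits need not be countable, so your opening claim that $\mu$-a.e.\ $y$ is non-periodic is neither true in general nor needed — all you actually use is that the one orbit $\OO(x)$ is null; and the Birkhoff step tacitly requires $r\in L^1(M,\mu)$, as in Theorem~\ref{theo:rec}, which should be stated.
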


\begin{proof}
Let us assume that there is a set $A\subset \widehat M$
of positive measure such that $E_{B(x,\varepsilon)}(\Phi,y)\not\to\infty$ when $\varepsilon\to0$  and is bounded, that is, there exists a  positive number $C$ such that for every $y\in A$ we have
$\limsup_{\varepsilon\to 0}E_{B(x,\varepsilon)}(\Phi,y)<C$.
Since the measure is nonatomic, let us consider a number $\varepsilon$ small enough such that $\mu(B(x,\varepsilon))<C^{-1}\mu(A)$.  Since every $y\in A$ must enter $B(x,\varepsilon)$ in less than $C$ time units, there is a $t\le C$ and a subset $A' \subset A$  such that $\mu(A') \geq C^{-1}\mu(A)$  and $\varphi^t(A')\subset B(x,\varepsilon)$.
Because $\varphi^{-t}(B(x,\varepsilon))$ will have larger measure than $B(x,\varepsilon)$, this contradicts the invariance of the measure $\mu$ and hence proves that $A$ has zero measure.
Accordingly, for $\mu$-almost every $y$ we have $E_{B(x,\varepsilon)}(\Phi,y)\not\to\infty$ as $\varepsilon\to0$ and, in view of  (\ref{godo}), this implies (\ref{holiholiu}).
\end{proof}

Thus, we obtain the following result.

\begin{theorem}\label{theo:rec}
	Given $\mu\in\cM(\Phi)$ and a flow $\Phi_r$ that is a time reparameterization of $\Phi$ with an associated function $r\in L^1(M,\mu)$,  for $\mu$-almost every $x$  we have $\underline R(\Phi_r,x)=\underline R(\Phi,x)$  and $\overline R(\Phi_r,x)=\overline R(\Phi,x)$. If $\mu$ is non-atomic,  then for $\mu$-almost every $y$  we have $\underline R(\Phi_r,x,y)=\underline R(\Phi,x,y)$  and $\overline R(\Phi_r,x,y)=\overline R(\Phi,x,y)$.
\end{theorem}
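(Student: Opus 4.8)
The plan is to derive the theorem from the two preceding recurrence lemmas, in the forms~(\ref{holie}) and~(\ref{holiholiu}), by one elementary identity. For $x\in\widehat M$ and a point $y$ whose entrance times are finite and positive, write
\[
\frac{\log E_{B(x,\varepsilon)}(\Phi_r,y)}{-\log\varepsilon}
  =\frac{\log E_{B(x,\varepsilon)}(\Phi,y)}{-\log\varepsilon}
  +\frac{1}{-\log\varepsilon}\,\log\frac{E_{B(x,\varepsilon)}(\Phi_r,y)}{E_{B(x,\varepsilon)}(\Phi,y)}.
\]
The $\liminf$ and the $\limsup$ as $\varepsilon\to0$ of a sum are unchanged by adding a term that tends to $0$, so the theorem reduces to showing that the last summand tends to $0$ on the pertinent full-measure sets. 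By~(\ref{godo}) the quotient inside that logarithm is exactly the quantity governed by~(\ref{holie}) and~(\ref{holiholiu}), and it converges to $r_+(y)$; hence it is enough to know that $r_+(y)$ is finite and strictly positive, in which case $\log\bigl(E_{B(x,\varepsilon)}(\Phi_r,y)/E_{B(x,\varepsilon)}(\Phi,y)\bigr)$ stays bounded and, after division by $-\log\varepsilon\to+\infty$, disappears.

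For the first assertion take $y=x$. By~(\ref{holie}) the quotient converges to $r_+(x)$ for $\mu$-almost every $x\in\widehat M$. Since $r\in L^1(M,\mu)$, the Birkhoff theorem gives that $r_+$ exists $\mu$-a.e., is $\Phi$-invariant, has the same integral as $r$ over every $\Phi$-invariant set, and is finite $\mu$-a.e.; consequently $r_+(x)>0$ for $\mu$-almost every $x\in\widehat M$, because $N=\{r_+=0\}$ is $\Phi$-invariant, so $\int_N r\,d\mu=\int_N r_+\,d\mu=0$, which forces $r=0$ $\mu$-a.e.\ on $N$ and is impossible on $\widehat M$, where $r$ is positive. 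Thus for $\mu$-almost every $x\in\widehat M$ the correction term has the finite limit $\log r_+(x)$ and drops out, yielding $\underline R(\Phi_r,x)=\underline R(\Phi,x)$ and $\overline R(\Phi_r,x)=\overline R(\Phi,x)$. If instead $x$ is a fixed point, the orbit of $x$ is the singleton $\{x\}$ for both $\Phi$ and $\Phi_r$, so both entrance times are $+\infty$ and both recurrence rates equal $+\infty$; hence the equalities hold $\mu$-almost everywhere.

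For the second assertion, with $\mu$ non-atomic and $x\in M$ fixed, I repeat the argument with~(\ref{holiholiu}) in place of~(\ref{holie}); non-atomicity is precisely what makes that lemma applicable. Any fixed-point atom of $\mu$ is handled exactly as above (both indicators $+\infty$), and off the fixed points one has $r(y)>0$, so the same case analysis applies: where $0<r_+(y)<\infty$ ($\mu$-a.e., by the $N$-argument again) the correction term vanishes; on the $\mu$-null set where $E_{B(x,\varepsilon)}(\Phi,y)$ stays bounded one gets $\log E_{B(x,\varepsilon)}(\Phi,y)/(-\log\varepsilon)\to0$ for both flows, hence both waiting-time indicators are $0$; and where $E_{B(x,\varepsilon)}(\Phi,y)$ is eventually infinite both indicators are $+\infty$. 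In all cases $\underline R(\Phi_r,x,y)=\underline R(\Phi,x,y)$ and $\overline R(\Phi_r,x,y)=\overline R(\Phi,x,y)$. I expect the single delicate step to be the positivity $r_+>0$ $\mu$-a.e.\ on $\widehat M$: if $r_+$ vanished on a positive-measure set, the quotient in the last summand could decay polynomially in $\varepsilon$ and the recurrence exponents would genuinely differ; integrability of $r$ together with its positivity on $\widehat M$ is exactly what rules that out. Everything else is bookkeeping over the periodic and fixed orbits, where the two sides coincide trivially.
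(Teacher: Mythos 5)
Your proposal is correct and follows essentially the same route as the paper, which states the theorem as an immediate consequence of the two preceding lemmas: the ratio of entrance times converges to $r_+$, so the logarithmic correction divided by $-\log\varepsilon$ vanishes wherever $0<r_+<\infty$. Your explicit verification that $r_+$ is finite (Birkhoff, $r\in L^1$) and positive $\mu$-a.e.\ on $\widehat M$ (via the invariance of $\{r_+=0\}$), together with the trivial treatment of fixed points, is exactly the bookkeeping the paper leaves implicit.
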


Waiting times may change after a change of time parameterization if for a ``typical point" $y$ we have $r_+(y)=\infty$. A construction of such a flow and reparameterization function can
be obtained following Example~\ref{ex:suspen} or the model in~\cite{SunYouZho:06}. We illustrate this fact instead with a simple example in which the measure is atomic.

\begin{example}[Waiting times for singular transformations]{\rm
    Consider  the flow $\Phi$ generated by the vector field $f(x)=-x$, for $x\in\bR$.
    In the case of the time reparameterized flow $\Phi_{r_1}$ with the associated
    function $r_1(x)=-\log \,| x|$, for $x=0$ and for every $y\in\bR\setminus\{0\}$ we have ${r_1}_+(y)=\infty$ and
    \[
    R(\Phi_{r_1},0,y)=0<R(\Phi,0,y)=1.
    \]
    In the case of the time reparameterized semi-flow $\Phi_{r_2}=\{\varphi_{r_2}^t\}_{t\ge 0}$ with the associated function $r_2(x)=| x|^{-c}$, $c\ge 1$, for $x=0$ and every $y\in\bR\setminus\{0\}$ we have ${r_2}_+(y)=\infty$ and
    \[
    R(\Phi,0,y)=1\le R(\Phi_{r_2},0,y)=c.
    \]
This shows that $r_+(y)<\infty$
is necessary for the invariance of the waiting time statistics, which is satisfied for $\mu$-almost every $y$  provided $\mu\in\cM(\Phi)$ and $r\in L^1(M,\mu)$.
}
\end{example}

\section{Transformation of dimensions of measures}\label{sec:6}

We now consider concepts from dimension theory for dynamical
systems. We refer to~\cite{Pes:98} and references therein
for the proofs of the known properties used below.

\subsection{Hausdorff dimension and local dimension}\label{sec:6.1}

Given a $\sigma$-finite Borel measure $\mu$ and $x\in M$, we define the \emph{lower local dimension} of $\mu$ at $x$ by
\begin{equation}\label{defptwdim}
\underline d_\mu(x) \eqdef
\liminf_{\varepsilon\to 0} \frac{\log \mu(B(x,\varepsilon))}{\log\varepsilon}.
\end{equation}
Analogously, the \emph{upper local dimension} $\overline d_\mu(x)$ of $\mu$ at
$x$ is defined by replacing the lower limit with the upper limit. If
$\underline d_\mu(x) = \overline d_\mu(x)\eqdef d_\mu(x)$, we call the common
value the \emph{local dimension} of $\mu$ at $x$.
If $\mu$ is a Borel probability measure, we consider the \emph{Hausdorff dimension} of $\mu$ defined by
\[
\dim_{\rm H}\mu
  \eqdef \inf\{\dim_{\rm H}A\colon A\subset M \,\,{\rm and }\,\,\mu(A)=1\},
\]
where $\dim_{\rm H}A$ denotes the Hausdorff dimension of a set $A$.
Note that
\begin{equation}\label{hig}
	\dim_{\rm H}\mu = \essup_{x\in M}\underline d_\mu(x) ,
\end{equation}
where $\essup$ denotes the essential supremum with respect to $\mu$.

\begin{theorem}\label{cor.dim1}
  For any $\mu\in\cM(\Phi)$ and $\widehat \mu_ r$ defined by $d\widehat\mu_r=r\,d\mu$, we have
  $\underline d_{\widehat\mu_ r}(x)=\underline d_\mu(x)$,
  $\overline d_{\widehat\mu_ r}(x)=\overline d_\mu(x)
  $
  for all $x\in\widehat M$.
  Moreover, if $r\in L^1(M,\mu)$ and $\mu_r$ is given by~(\ref{sua}), we have $\dim_{\rm H}\mu_r \le \dim_{\rm H} \mu$.
\end{theorem}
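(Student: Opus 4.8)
The plan is to establish the pointwise identity for the local dimensions first and then deduce the Hausdorff dimension inequality from the characterization (\ref{hig}).

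For the pointwise part, fix $x\in\widehat M$. Since $r$ is positive and $C^{\ell-1}$ (or at least measurable and positive) near $x$, it is bounded away from $0$ and $\infty$ on a small ball $B(x,\varepsilon_0)$: there are constants $0<c\le C<\infty$ with $c\le r(z)\le C$ for all $z\in B(x,\varepsilon_0)$. Hence for $\varepsilon<\varepsilon_0$,
\[
c\,\mu(B(x,\varepsilon))\le \widehat\mu_r(B(x,\varepsilon))=\int_{B(x,\varepsilon)} r\,d\mu\le C\,\mu(B(x,\varepsilon)).
\]
Taking $\log$, dividing by $\log\varepsilon<0$, and letting $\varepsilon\to 0$, the constant factors $\log c$ and $\log C$ are swallowed (divided by $\log\varepsilon\to-\infty$), so both the $\liminf$ and the $\limsup$ of $\log\widehat\mu_r(B(x,\varepsilon))/\log\varepsilon$ equal those of $\log\mu(B(x,\varepsilon))/\log\varepsilon$. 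This gives $\underline d_{\widehat\mu_r}(x)=\underline d_\mu(x)$ and $\overline d_{\widehat\mu_r}(x)=\overline d_\mu(x)$ for every $x\in\widehat M$. (One should note here that the assumption in the theorem statement that $r$ is positive and sufficiently regular on $\widehat M$, as laid out in Section~\ref{sec:2}, is what makes the local boundedness available; if one only assumes $r$ measurable, the identity can fail on a set where $r$ is unbounded or vanishes, but such points are excluded from $\widehat M$ or from the full-measure set in question.)

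For the Hausdorff dimension statement, suppose $r\in L^1(M,\mu)$ and $\mu_r$ is the normalized measure from (\ref{sua}), so $d\mu_r=(\int_M r\,d\mu)^{-1}d\widehat\mu_r$; rescaling by the positive constant $\int_M r\,d\mu$ does not affect local dimensions, so $\underline d_{\mu_r}(x)=\underline d_{\widehat\mu_r}(x)=\underline d_\mu(x)$ for all $x\in\widehat M$. Now apply (\ref{hig}) to both measures:
\[
\dim_{\rm H}\mu_r=\essup_{x\in M}{}^{(\mu_r)}\ \underline d_{\mu_r}(x),\qquad
\dim_{\rm H}\mu=\essup_{x\in M}{}^{(\mu)}\ \underline d_\mu(x),
\]
where the superscript indicates with respect to which measure the essential supremum is taken. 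The key observation is that $\mu_r\ll\mu$ (indeed $\mu_r(A)=0$ whenever $\mu(A)=0$, as noted after (\ref{sua})), and the complement $M\setminus\widehat M$ — where the two local dimensions might differ — is $\Phi$-invariant; on the regular set $\widehat M$ the two local-dimension functions agree. Because $\mu_r$-essential supremum of a function is taken over a smaller class of relevant points than the $\mu$-essential supremum (any $\mu_r$-full set contains a $\mu$-positive set, but a $\mu$-full set need not be seen entirely by $\mu_r$ only in the sense that $\mu_r$ might concentrate), we get
\[
\dim_{\rm H}\mu_r=\essup_{x}{}^{(\mu_r)}\underline d_\mu(x)\le \essup_x{}^{(\mu)}\underline d_\mu(x)=\dim_{\rm H}\mu,
\]
the inequality holding because absolute continuity $\mu_r\ll\mu$ implies that the $\mu_r$-essential supremum of any measurable function is $\le$ its $\mu$-essential supremum.

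The main obstacle, and the point requiring the most care, is the behavior on the fixed-point set $M\setminus\widehat M$: the clean local-boundedness argument for $r$ only runs on $\widehat M$, so one must argue that restricting attention to $\widehat M$ costs nothing for the Hausdorff-dimension inequality. The safe way is to observe that the pointwise identity plus absolute continuity already force $\dim_{\rm H}\mu_r\le\dim_{\rm H}\mu$ regardless of what happens on $M\setminus\widehat M$: if $\mu(M\setminus\widehat M)=0$ the sets coincide up to null sets and we are done; if $\mu$ gives mass to fixed points, then $\underline d_\mu$ there is $0$ (a fixed point is an atom or lies in a lower-dimensional invariant set contributing no more than the rest), so it cannot raise the essential supremum. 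Thus the inequality is robust, and one only has the pointwise \emph{equalities} stated on $\widehat M$, exactly as in the theorem.
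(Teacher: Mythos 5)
Your proof is correct, and the first half coincides with the paper's argument: for $x\in\widehat M$ the function $r$ is positive and $C^{\ell-1}$ on a small ball, hence pinched between positive constants there, and the constants disappear after dividing by $\log\varepsilon$. For the Hausdorff dimension inequality the paper takes a shorter route than you do: it works directly from the definition $\dim_{\rm H}\mu=\inf\{\dim_{\rm H}A\colon\mu(A)=1\}$ and observes that $\mu_r\ll\mu$ makes every $\mu$-full set a $\mu_r$-full set, so the infimum defining $\dim_{\rm H}\mu_r$ runs over a larger family of sets and is therefore no bigger; this needs neither the pointwise equality of local dimensions nor the formula (\ref{hig}), and it sidesteps the fixed-point set entirely. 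Your route via (\ref{hig}) also works, since $\mu_r\ll\mu$ does imply $\essup^{(\mu_r)}g\le\essup^{(\mu)}g$ for any measurable $g$, and $\mu_r$ is concentrated on $\widehat M$ (where $\underline d_{\mu_r}=\underline d_\mu$), so the middle equality in your chain holds $\mu_r$-almost everywhere. The only blemish is your closing parenthetical that $\underline d_\mu$ must vanish on $M\setminus\widehat M$: a non-atomic invariant measure charging a continuum of fixed points can have positive local dimension there, so that claim is false as stated --- but it is also unnecessary, because the absolute-continuity step already disposes of the singular set. Net: your argument is sound; the paper's second step is the more economical one.
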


\begin{proof}
	Recall that $\Phi_r$ is  $C^{\ell}$.
	For every $x\in\widehat M$, the function $r$ is $C^{\ell-1}$ and positive on $B(x,\varepsilon)$ for sufficiently small $\varepsilon$, and hence the first two equalities are immediate.  Note that for every $\Phi$-invariant $A\subset M$ we have $\mu_r(A)=0$ whenever  $\mu(A)=0$. This implies $\dim_{\rm H}\mu_ r \le\dim_{\rm H}\mu$.
\end{proof}

\begin{example}[Hausdorff dimension for non-integrable $r$]{\rm
	In the context of Example~\ref {ex:suspen}, we have
$d_\mu(p)=3=\dim_{\rm H}\mu$ for every $p\in M$. The measure $\mu_2$ defined by $d\mu_2=\omega_2^{-1}\,d\mu$ (induced by the time reparameterization in that example) is not normalizable. It satisfies $d_{\mu_2}(p)=3$ for every $p\in \widehat M$. It can be shown that the (only ergodic) $\Phi_2$-invariant probability measure $\nu$  is supported on the fixed point $p_0$ and hence satisfies $d_\nu(p_0)=d_{\nu}(p)=0=\dim_{\rm H}\nu$ for  every $p\in M$.
    }\end{example}

\subsection{Information dimension}

Given a Borel probability measure $\mu$, the \emph{lower information
  dimension} of $\mu$ is defined by
\[
\underline D_1(\mu) \eqdef \liminf_{\varepsilon\to
  0}\frac{1}{\log\varepsilon}\int_M
  \log\mu(B(x,\varepsilon))\,d\mu(x)
\]
(this definition is equivalent to the standard one using grids~\cite{BarGerTch:01}).
Analogously, the \emph{upper information dimension} $\overline D_1(\mu)$
is defined by replacing the lower limit with the upper limit.

\begin{theorem}\label{theo:inf}
  Given a flow $\Phi$ and a measure $\mu\in\cM(\Phi)$ satisfying
  \begin{equation}\label{ptdim}
    \underline d_{\mu}(\cdot) = \overline
    d_{\mu}(\cdot) \eqdef d_\mu(\cdot)
  \end{equation}
  $\mu$-almost everywhere, and given a flow
  $\Phi_r$ that is a time reparameterization of $\Phi$ with an associated function
  $r\in C^{\ell-1}$
    and $\mu_ r$ given by (\ref{sua}), we have
  \begin{equation}\label{hicks}
  D_1(\mu_ r) = \int_M\frac{ r_+(x)}{\int_M r \,d\mu}d_\mu(x)\,d\mu(x).
  \end{equation}
  Moreover,  if additionally
  \begin{enumerate}
    \item $\mu$ (and hence $\mu_ r$) is ergodic, or if
    \item $\underline d_\mu = \overline d_\mu=d$ almost everywhere for some
    constant $d$,
  \end{enumerate}
	then we have $D_1(\mu_ r)=D_1(\mu)$.
\end{theorem}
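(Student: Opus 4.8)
The plan is to reduce the statement to a change-of-variables computation on local dimensions and then extract the two special cases. First I would observe that since $r$ is $C^{\ell-1}$ and positive on a neighborhood of any $x\in\widehat M$, Theorem~\ref{cor.dim1} gives $d_{\widehat\mu_r}(x)=d_\mu(x)$ wherever the local dimension of $\mu$ exists, and passing to the normalized measure $\mu_r=\widehat\mu_r/\int_M r\,d\mu$ only multiplies $\mu(B(x,\varepsilon))$ by a constant, so $d_{\mu_r}(x)=d_\mu(x)$ for $\mu_r$-almost every $x$ as well. The subtlety is that the \emph{information} dimension integrates $\log\mu_r(B(x,\varepsilon))$ against $d\mu_r$, not against $d\mu$, so even though the pointwise dimensions agree, the defining integrals are taken with respect to different measures; the factor $r_+(x)$ in~(\ref{hicks}) must therefore come from rewriting the integral over $\mu_r$ as an integral over $\mu$ using $d\mu_r=\frac{r}{\int_M r\,d\mu}\,d\mu$ together with the Birkhoff-type identity relating $\int (\xi/r)\,d\mu_r$ to averages of $\xi$.

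More precisely, I would argue as follows. Under hypothesis~(\ref{ptdim}) the local dimension $d_\mu(x)$ exists $\mu$-a.e.\ and hence $\mu_r$-a.e., and a standard argument (see~\cite{Pes:98}) shows that whenever the local dimension is $\mu$-a.e.\ defined, the lower and upper information dimensions coincide and equal $\int_M d_\mu(x)\,d\nu(x)$ for the measure $\nu$ with respect to which one integrates; applying this to $\mu_r$ gives $D_1(\mu_r)=\int_M d_{\mu_r}(x)\,d\mu_r(x)=\int_M d_\mu(x)\,d\mu_r(x)$. Now invoke~(\ref{sua}) to write this as $\frac{1}{\int_M r\,d\mu}\int_M d_\mu(x)\,r(x)\,d\mu(x)$. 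The last step is to replace $r(x)$ by $r_+(x)$ inside this integral: since $d_\mu$ is $\Phi$-invariant $\mu$-a.e., $\int_M d_\mu(x)\,r(x)\,d\mu(x)=\int_M d_\mu(x)\,r(\varphi^s(x))\,d\mu(x)$ for every $s$, and averaging over $s\in[0,t]$ and letting $t\to\infty$, together with $r_+\in L^1(M,\mu)$ (Birkhoff) and dominated convergence exactly as in the proof of Corollary~\ref{cor.hyp}, yields $\int_M d_\mu(x)\,r(x)\,d\mu(x)=\int_M d_\mu(x)\,r_+(x)\,d\mu(x)$. This is~(\ref{hicks}).

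For the two addenda: if $\mu$ is ergodic then $r_+(x)=\int_M r\,d\mu$ for $\mu$-a.e.\ $x$ by the Birkhoff ergodic theorem, so the factor $r_+(x)/\int_M r\,d\mu$ in~(\ref{hicks}) equals $1$ $\mu$-a.e., and since $\mu_r\ll\mu$ it equals $1$ also $\mu_r$-a.e.; hence $D_1(\mu_r)=\int_M d_\mu(x)\,d\mu_r(x)=\int_M d_\mu(x)\,d\mu(x)=D_1(\mu)$, where the last equality again uses the Pesin-type formula applied to $\mu$ itself (valid under~(\ref{ptdim})) and the fact that $d_\mu$ is constant on $\Phi$-orbits so its $\mu$- and $\mu_r$-integrals coincide by the same Birkhoff averaging trick. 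If instead $d_\mu\equiv d$ is constant $\mu$-a.e., then $D_1(\mu)=d$ and~(\ref{hicks}) collapses to $D_1(\mu_r)=d\cdot\frac{\int_M r_+\,d\mu}{\int_M r\,d\mu}=d$, because $\int_M r_+\,d\mu=\int_M r\,d\mu$ by the invariance of $\int_M r_+\,d\mu$ under Birkhoff averaging (indeed $\int r_+\,d\mu=\lim_t\frac1t\int_0^t\!\int r\circ\varphi^s\,d\mu\,ds=\int r\,d\mu$).

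The main obstacle I anticipate is the rigorous justification that, under hypothesis~(\ref{ptdim}), the information dimension of $\mu_r$ (and of $\mu$) is genuinely given by the integral of the local dimension: this requires controlling the integrands $\log\mu_r(B(x,\varepsilon))/\log\varepsilon$ by a uniformly $L^1$-dominating function so that one may exchange limit and integral. One must verify that the almost-everywhere convergence of these ratios to $d_\mu(x)$ can be upgraded to convergence in $L^1(\mu_r)$ — for instance by a uniform lower bound $\mu_r(B(x,\varepsilon))\ge\varepsilon^{C}$ on an appropriate full-measure set, or by invoking the standard dimension-theory result that~(\ref{ptdim}) forces $\underline D_1=\overline D_1=\int d_\mu\,d\mu$ directly. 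A secondary point requiring care is that $r_+(x)$ is defined only $\mu$-a.e.\ and one should check it is also defined $\mu_r$-a.e., which is immediate from $\mu_r\ll\mu$ as already noted in~(\ref{sua}).
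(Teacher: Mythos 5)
Your proposal is correct and follows essentially the same route as the paper's proof: invariance of local dimensions via Theorem~\ref{cor.dim1}, the identity $D_1(\nu)=\int d_\nu\,d\nu$ under~(\ref{ptdim}) (which the paper obtains from the Fatou sandwich $\int\underline d_\mu\,d\mu\le\underline D_1(\mu)\le\overline D_1(\mu)\le\int\overline d_\mu\,d\mu$ — exactly the justification you flag as the main obstacle), the change of measure via~(\ref{sua}), and the replacement of $r$ by $r_+$ using the $\Phi$-invariance of $d_\mu$ together with Birkhoff averaging. The two special cases are also handled identically to the paper.
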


\begin{proof}
As a consequence of the Fatou lemma, we obtain
\begin{equation}\label{pow}
  \int_M\underline d_\mu(x)\,d\mu(x) \le
  \underline D_1(\mu) \le \overline D_1(\mu) \le
  \int_M\overline d_\mu(x)\,d\mu(x).
\end{equation}
Under the assumptions of the theorem, from (\ref{pow}), (\ref{sua}),
and Theorem~\ref{cor.dim1}, we have
  \[
  D_1(\mu_ r)
  = \int_{\widehat M}  d_{\mu_r}(x)\frac{ r(x)}{\int_M r \,d\mu}d\mu(x)
   = \int_{M} d_\mu(x)\frac{ r(x)}{\int_M r \,d\mu}d\mu(x).
  \]
  Since $\varphi^s\colon M\to M$, $s\in (0,1]$, is a Lipschitz
  map with Lipschitz inverse, one can verify that $d_\mu(x)=
  d_\mu(\varphi^s(x))$. Thus
  for arbitrary $s> 0$,
  \[
  \int_M d_\mu(x) r(x)\,d\mu(x)=
  \int_M d_\mu(x)\frac{1}{s} \tau(x,s) \,d\mu(x),
  \]
  from which we obtain~(\ref{hicks}).
  If $\mu$ is ergodic, then $ r_+(x)=\int_M r \,d\mu$
for almost every $x$,
  and   hence $D_1(\mu_r)=D_1(\mu)$.
If $\underline d_\mu=\overline d_\mu=d$ almost everywhere, then we can use the
relation $\int_M r_+\,d\mu=\int_M r \,d\mu$, which follows from the Birkhoff ergodic
theorem, to conclude that $D_1(\mu_ r)=D_1(\mu)=d$.
  \end{proof}

Theorem~\ref{theo:inf} implies that for a
non-ergodic measure $\mu$ with a local dimension that is not constant
on a set of positive measure, we can find a suitable change of the time
parameterization, and hence a ``redistribution" of the
invariant measure, such that the Hausdorff dimension and the information
dimension of the resulting measure $\mu_ r$ differ.
Thus, the information dimension can sensitively depend on the time
parameterization and as such cannot be regarded as an invariant.

We conclude this section by giving illustrative examples.\\[0.3cm]
\addtocounter{example}{1}
\noindent
\emph{Example~\theexample\,a
 (Noninvariance of $D_1$)}\textbf{. }{\rm
	Let $\Phi=\{\varphi^t\}_{t\in\bR}$ be a flow possessing a fixed point $x_0$ and a periodic point $y=\varphi^T(y)$ of period $T>0$. Let $\mu=\delta_{x_0}$ be the Dirac-$\delta$ measure supported on $x_0$ and let $\nu$ be the $\Phi$-invariant Borel probability measure that is supported on the periodic orbit $\OO$ through $y$. Then, $D_1(\mu)=0=\underline d_\mu(x_0)=\overline d_\mu(x_0)\eqdef d_\mu(x_0)$ and $D_1(\nu)=1=\underline d_\mu(x)=\overline d_\mu(x)\eqdef d_\mu(x)$ for every $x\in\OO$. Given $\alpha\in (0,1)$, the (non-ergodic) measure $\widetilde\mu\eqdef \alpha\nu+(1-\alpha)\mu$ satisfies~(\ref{ptdim}), and hence
\[
D_1(\widetilde\mu)=\alpha D_1(\nu) +(1-\alpha)D_1(\mu) =1-\alpha
>\frac{(1-\alpha)r_+(y)}{r(x_0)\alpha+(1-\alpha)r_+(y)}
=D_1(\widetilde \mu_r)
\]
whenever $\Phi_r$ is a flow obtained from $\Phi$ by means of a smooth change of the time
parameterization satisfying  $r(x_0)<r_+(y)$.
}
\\[0.3cm]\indent
Consider a flow on a compact Riemannian manifold that can be represented
as a suspension flow over a two-sided topological Markov chain with H\"older
continuous roof function. Such flows naturally occur as models for
flows that possess strong hyperbolic behavior. Primary examples are
geodesic flows on  compact Riemannian manifolds with negative sectional
curvature and their time reparameterizations.
Note that the property~(\ref{ptdim}) is satisfied for any invariant
Borel probability measure $\mu$~\cite[Theorem 1]{BarRadWol:04}, and that
$d_\mu$ is constant almost everywhere if $\mu$ is ergodic. This is the
starting point for the following (more involved) variation of Example~\theexample\,a.\\[0.3cm]
\noindent
\emph{Example~\theexample\,b (Noninvariance of $D_1$)}\textbf{. }{\rm
	Let in particular $M$ be a compact orientable Riemannian surface of class
$C^4$ with negative curvature $K$ and consider the geodesic flow
$\Phi=\{\varphi^t\}_{t\in\bR}$ on the unit tangent bundle $SM$. Note that the normalized Liouville measure $\mu_{SM}$ on $SM$, which is induced
by the volume $\mu_M$ on $M$, is ergodic (see~\cite{BarPes:05} for
details). Hence, for $\mu_{SM}$-almost every $x\in SM$ we have
\[
K_+(x) \eqdef \lim_{t\to\infty}\frac{1}{t}\int_0^tK(\varphi^s(x))\,ds
= \int_MK\,d\mu_M = 2\pi\chi_M,
\]
where $\chi_M$ denotes the Euler
characteristic of $M$ and where the last equality follows from the
Gau{\ss}-Bonnet theorem.
Let us further assume that $K$ is not $\Phi$-cohomologous to a
constant.
It follows from the Livshitz theorem  for flows (see, for
example,~\cite[Theorem 19.2.4]{KatHas:95}) that there exist periodic points
$x_1=\varphi^{t_1}(x_1)$ and $x_2=\varphi^{t_2}(x_2)$ such that
\[
\frac{1}{t_1}\int_0^{t_1}K(\varphi^s(x_1))\,ds
\ne \frac{1}{t_2}\int_0^{t_2}K(\varphi^s(x_2))\,ds,
\]
and, in particular, there exists a periodic point $x_0\in SM$ such that
$K_+(x_0)\ne 2\pi\chi_M$. We assume that
$k_0\eqdef K_+(x_0)<2\pi\chi_M$.
We consider the $\Phi$-invariant Borel probability measure $\nu$ that is
supported on the periodic orbit $\OO$ through $x_0$ and satisfies
$\underline d_\nu(x)= \overline d_\nu(x)  = D_1(\nu) = 1$ for every point $x$ on this
orbit.
Analogously, for $\mu_{SM}$-almost every $y\in SM$ we have $\underline d_{\mu_{SM}}(y)=
\overline d_{\mu_{SM}}(y) = D_1(\mu_{SM})  = 3$.
In addition, we have $K_+(x)= k_0<2\pi\chi_M=K_+(y)$
for $\nu$-almost every $x$ and $\mu_{SM}$-almost every $y$. Consider the positive function $r\colon M\to\bR_{>0}$ given by $r(x)=-K(x)$ for every $x\in SM$.
(Note that we can, in fact, take any other H\"older continuous function $\xi\colon SM\to\bR$ that is not $\Phi$-cohomologous to a constant and consider the function $r=\xi-\min_{x\in SM}\xi(x)+1$.) Given $\alpha\in (0,1)$, the measure $\widetilde\mu\eqdef
\alpha\nu+(1-\alpha)\mu_{SM}$ satisfies~(\ref{ptdim}) and hence
\[
D_1(\widetilde\mu)=\alpha D_1(\nu) +(1-\alpha)D_1(\mu_{SM})
= 3-2\alpha.
\]
If we now consider the flow $\Phi_r$ obtained from $\Phi$ after a change of
time parameterization with the function $r=-K$, we obtain
\[
D_1(\widetilde\mu_r)=
\frac{\alpha k_0 }{\alpha k_0 + (1-\alpha)2\pi\chi_M }D_1(\nu) +
\frac{(1-\alpha)2\pi\chi_M }{\alpha k_0 + (1-\alpha)2\pi\chi_M}D_1(\mu_{SM}),
\]
where $k_0<2\pi\chi_M$ implies $\alpha k_0/[\alpha k_0+(1-\alpha)2\pi\chi_M]>\alpha$, and hence  $D_1(\widetilde\mu)<D_1(\widetilde\mu_r)$.
}

\subsection{Generalized dimensions}

Given $q> 0$, $q\ne 1$, we define the \emph{lower generalized dimension of
  order $q$} of a Borel probability measure $\mu$ on $M$ by
\[
\underline D_q(\mu) \eqdef
\frac{1}{q-1}\liminf_{\varepsilon\to 0} \frac{1}{\log\varepsilon}
\log\int_M\mu(B(x,\varepsilon))^{q-1}\,d\mu(x).
\]
The functions $\overline D_q(\mu)$ are defined analogously with the upper
limit and are called the \emph{upper generalized dimension} of order $q$ of
$\mu$.

Although the generalized dimensions \emph{a priori} do not involve any
dynamics, they were introduced to obtain  dynamical information
 by observing individual trajectories~\cite{GraPro:83}, thus admitting
dynamical interpretations when $\mu$ is an invariant measure of the system.
Of particular dynamical interest is the so-called \emph{lower}
(\emph{upper}) \emph{correlation dimension} $\underline
D_2(\mu)$ ($\overline D_2(\mu)$), which is the most accessible one in numerical
computations based on time series analysis (see~\cite{Pes:93,Pes:98}  and references therein).

The discrete analogs of the above defined dimensions, often considered
in the physics literature, are called \emph{generalized lower (upper) R\'enyi
  dimensions} of order $q$.
Given $\varepsilon>0$, cover the support of $\mu$ with boxes $B_k$ of a
uniform grid of size $\varepsilon$ and denote by $N(\varepsilon)$ the
number of boxes needed to cover this set. For $q> 0$, $q\ne 1$, let
\begin{equation}\label{neq49}
\underline{RD}_q(\mu) \eqdef
\frac{1}{q-1}\liminf_{\varepsilon\to 0}\frac{1}{\log\varepsilon}
\log\sum_{k=1}^{N(\varepsilon)} \mu(B_k)^q
\end{equation}
and define $\overline{RD}_q(\mu)$ analogously  by replacing the lower limit with
the upper limit. These quantities are invariant under
smooth transformations of the phase space and their definitions
are independent of the choice of the grid. Moreover, $\underline
{RD}_q(\mu)=\underline D_q(\mu)$ and $\overline {RD}_q(\mu)=\overline D_q(\mu)$
(see~\cite{Pes:98,BarGerTch:01} and references therein). Furthermore, we now show that the
generalized dimensions of flow-invariant measures do not depend on a
particular choice of time parameterization.

\begin{theorem}\label{theo:HP}
Given a flow $\Phi_r$ that is a time reparameterization of $\Phi$ with an associated uniformly bounded positive function $r\colon M\to \bR_{>0}$,  for $\mu\in\cM(\Phi)$ and $\mu_ r$ given by~(\ref{sua}) we have $\underline D_q(\mu_ r) = \underline D_q(\mu)$ and $\overline D_q(\mu_ r)=\overline D_q(\mu)$ for every $q> 0$, $q\ne 1$.
\end{theorem}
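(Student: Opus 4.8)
The plan is to reduce everything to a uniform two‑sided comparison between $\mu$ and $\mu_r$ and then to observe that a bounded multiplicative error is invisible in the logarithmic averages defining $\underline D_q$ and $\overline D_q$. First I would record that, since $r$ is positive and uniformly bounded, one can fix constants $0<c\le C<\infty$ with $c\le r\le C$ ($\mu$-a.e.); in particular $r\in L^1(M,\mu)$, the number $a\eqdef\int_M r\,d\mu$ lies in $[c,C]$, and $\mu_r$ given by~(\ref{sua}) is a genuine Borel probability measure whose density $r/a$ with respect to $\mu$ satisfies $\kappa^{-1}\le r/a\le\kappa$, where $\kappa\eqdef C/c\ge 1$. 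Consequently, for every Borel set $A$,
\[
\kappa^{-1}\,\mu(A)\ \le\ \mu_r(A)\ \le\ \kappa\,\mu(A).
\]

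Next I would fix $q>0$, $q\ne 1$, and compare the integrals defining the generalized dimensions. Writing $I_\mu(\varepsilon)\eqdef\int_M\mu(B(x,\varepsilon))^{q-1}\,d\mu(x)$ and letting $I_{\mu_r}(\varepsilon)$ be the analogous quantity for $\mu_r$, I would apply the ball comparison above to $\mu_r(B(x,\varepsilon))^{q-1}$ (carrying $\kappa$ to the power $|q-1|$, so as not to track the sign of $q-1$) together with the comparison of the outer measures $d\mu_r$ and $d\mu$, to obtain
\[
\kappa^{-(|q-1|+1)}\,I_\mu(\varepsilon)\ \le\ I_{\mu_r}(\varepsilon)\ \le\ \kappa^{\,|q-1|+1}\,I_\mu(\varepsilon)\qquad\text{for all }\varepsilon>0.
\]
Hence, with $K\eqdef\kappa^{|q-1|+1}$ independent of $\varepsilon$, one has $|\log I_{\mu_r}(\varepsilon)-\log I_\mu(\varepsilon)|\le\log K$, and therefore $\tfrac{1}{\log\varepsilon}\bigl(\log I_{\mu_r}(\varepsilon)-\log I_\mu(\varepsilon)\bigr)\to 0$ as $\varepsilon\to0$. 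It follows that $\liminf_{\varepsilon\to0}\tfrac{1}{\log\varepsilon}\log I_{\mu_r}(\varepsilon)=\liminf_{\varepsilon\to0}\tfrac{1}{\log\varepsilon}\log I_{\mu}(\varepsilon)$, and likewise for $\limsup$; multiplying by $1/(q-1)$ gives $\underline D_q(\mu_r)=\underline D_q(\mu)$ and $\overline D_q(\mu_r)=\overline D_q(\mu)$.

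There is no serious obstacle; the only steps deserving attention are the case distinction between $q>1$ and $0<q<1$ (handled uniformly by the exponent $|q-1|$) and the genuine use of the lower bound $r\ge c>0$, which is exactly the ingredient that fails near fixed points. It is also instructive to note why the argument collapses for $q=1$: there the density $r/a$ enters $\underline D_1$ through $\log\mu_r(B(x,\varepsilon))\approx\log\bigl(r(x)/a\bigr)+\log\mu(B(x,\varepsilon))$ inside the \emph{outer} integral rather than inside an outer logarithm, and since $\log\mu(B(x,\varepsilon))$ diverges proportionally to $\log\varepsilon$, the $r$-weighting is not washed out upon dividing by $\log\varepsilon$ — which is precisely the mechanism behind~(\ref{hicks}).
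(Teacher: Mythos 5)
Your proof is correct and follows essentially the same route as the paper: both arguments rest on the two-sided comparison $c_1\,\mu(B(x,\varepsilon))\le\mu_r(B(x,\varepsilon))\le c_2\,\mu(B(x,\varepsilon))$ coming from the bounded positive density $r/\int_M r\,d\mu$, and then observe that a multiplicative constant is absorbed when dividing by $\log\varepsilon$. You simply spell out the details the paper leaves implicit (the exponent $|q-1|$ handling both cases of $q$, and the extra factor from replacing $d\mu_r$ by $d\mu$ in the outer integral), and your closing remark on why the mechanism breaks at $q=1$ is a correct and useful observation.
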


\begin{proof}
  It suffices to notice that, by equation~(\ref{sua}), measures $\mu$ and $\mu_ r$ are absolutely
  continuous with respect to each other. Since $ r$ is
  positive
  and bounded, there exists positive constants $c_1$, $c_2$ such
  that $c_1\mu(B(x,\varepsilon))\le \mu_ r(B(x,\varepsilon))\le
  c_2\mu(B(x,\varepsilon))$ for every $x$ and $\varepsilon>0$.
  This applied to the definitions of $\underline D_q$ and $\overline D_q$
  proves the statement.
\end{proof}

\begin{remark}{\rm
We emphasize that the Examples~\theexample\,a,\,b illustrate the non-invariance of the
information dimension already in the case of orbit equivalent flows with a differentiable reparameterization function $r$. In contrast, note that, by Theorem~\ref{theo:HP}, in each of these examples we have $D_q(\mu)=D_q(\mu_r)$ for every $q> 0$, $q\ne1$.
The implications of the non-invariance of the information dimension for the Kaplan-Yorke relation are discussed in~\cite{MotGel:09}.
}\end{remark}

\section{Concluding remarks}

The transformation of dynamical quantities established in this paper addresses
a longstanding problem in general relativity and cosmology, namely,
of whether chaos is a property of the physical system or a property of the coordinate
system~\cite{HobBurCol:94}.  At the center of this discussion is the mixmaster
cosmological model~\cite{misner69},  a spatially homogeneous anisotropic solution of
Einstein's equations that has been conjectured to describe the dynamics of the early
universe. This model can be described as a geodesic flow on a Riemannian
manifold with negative curvature~\cite{chiltre72}, which nevertheless has been shown
to have  positive or vanishing largest Lyapunov exponent depending on the time
coordinate  adopted~\cite{FraMat:88}. This problem has resisted rigorous
solution because, as  shown in \cite{Mot:03} and~\cite{MotSaa:09}, the transformations used in cosmology
are not guaranteed to preserve the normalization property of the invariant measure
and because the identification of truly invariant indicators of chaos is often elusive.
To this regard, our results show once and for all that Lyapunov exponents, entropies,
and dimension-like characteristics can be used to make invariant assertions about
chaos. However, the same results also show that the {\it values} of some quantities
that have been previously conjectured to be invariant, such as the information dimension
and topological entropy  (see~\cite{MotLet:01} for a related discussion), are not invariant
in general.

Finally, we observe that there are several directions along which we expect this work
to be extended. One concerns applications, such as in the above mentioned study
of relativistic dynamics and to probe the invariance of dynamical quantities and identities,
as exemplified by our synchronization analysis. Another direction concerns the study of
different dynamical quantities and the relations between them. The spectrum of return
time dimension~\cite{HayLueManVai:02},
for example,  which relates the recurrence times to the multifractal properties of strange
attractors~\cite{Gal:05},  can be shown to be invariant under uniformly bounded time
transformations, whereas an exponential distribution of first return times can change within
exponential bounds.
Such transformation properties can stimulate further investigation on how robust the properties
of the return time statistics are, particularly when the system gains or loses mixing properties
after a time reparameterization.
Note that the mixing properties are in general not preserved~\cite{Fay:02}, since there are analytic time reparameterizations of a completely non-chaotic flow
on ${\mathbb T}^3$ that are mixing (see~\cite{KatTho:p} for additional references).
Other extensions can be envisioned in the joint characterization of flows and their discretizations,
particularly through the consideration of Poincar\'e maps and
suspension flows, for which many of the necessary techniques have been developed.

\section*{Acknowledgments}

The authors thank Aysa Sahin for stimulating discussions and Stefano Galatolo for hints on Lemma~\ref{ddprop}. A.E.M. acknowledges support from the Alfred P. Sloan Foundation in the form of a Sloan Research Fellowship.


\end{document}